\documentclass[11pt,fleqn]{article}

\usepackage[paper=a4paper]
 {geometry}

\pagestyle{plain}

\usepackage{paragraphs}
\usepackage{hyperref}
\usepackage{amsthm,thmtools}

\usepackage[utf8]{inputenc}
\usepackage[spanish,english]{babel}
\usepackage{enumitem}
\usepackage[osf,noBBpl]{mathpazo}
\usepackage[alphabetic,initials]{amsrefs}
\usepackage{amsfonts,amssymb,amsmath}
\usepackage{mathtools}
\usepackage{graphicx}
\usepackage[poly,arrow,curve,matrix]{xy}
\usepackage{wrapfig}
\usepackage{xcolor}
\usepackage{helvet}
\usepackage{stmaryrd}

\declaretheoremstyle[headformat=swapnumber, spaceabove=\paraskip,
bodyfont=\itshape]{mystyle}
\declaretheoremstyle[headformat=swapnumber, spaceabove=\paraskip,
bodyfont=\normalfont]{mystyle-plain}
\declaretheorem[name=Lemma, sibling=para, style=mystyle]{Lemma}

\declaretheoremstyle[numbered=no, spaceabove=\paraskip,
bodyfont=\itshape]{mystyle-empty}
\declaretheoremstyle[numbered=no, spaceabove=\paraskip,
bodyfont=\itshape]{mystyle-empty-plain}
\declaretheorem[name=Lemma, style=mystyle-empty]{Lemma*}
\declaretheorem[name=Proposition, style=mystyle-empty]{Proposition*}
\declaretheorem[name=Theorem, style=mystyle-empty]{Theorem*}
\declaretheorem[name=Corollary, style=mystyle-empty]{Corollary*}
\declaretheorem[name=Definition, style=mystyle-empty]{Definition*}
\declaretheorem[name=Remark, style=mystyle-empty]{Remark*}

\declaretheoremstyle[
    headformat={{\bfseries\NUMBER.}{\itshape\NAME}\NOTE\ignorespaces},
    spaceabove=\paraskip, 
    headpunct={.},
    headfont=\itshape,
    bodyfont=\normalfont
    ]{mystyle-plain}


\makeatletter
\renewenvironment{proof}[1][\textit{Proof}]{\par
 \pushQED{\qed}%
 \normalfont \topsep.75\paraskip\relax
 \trivlist
 \item[\hskip\labelsep
    \itshape
  #1\@addpunct{.}]\ignorespaces
}{%
 \popQED\endtrivlist\@endpefalse
}
\makeatother

\setenumerate[0]{label=(\alph*)}
\newcommand\NN{\mathbb N}

\newcommand\ZZ{\mathbb Z}

\newcommand\ot{\otimes}

\renewcommand\to{\longrightarrow}
\renewcommand\phi{\varphi}
\renewcommand\k{\Bbbk}

\renewcommand\O{\mathcal O}
\newcommand\R{\mathcal R}
\newcommand\op{\mathsf{op}}
\newcommand\K{\mathcal K}
\newcommand\D{\mathcal D}
\newcommand\A{\mathcal A}

\DeclareMathOperator\Mod{\mathsf{Mod}}
\DeclareMathOperator\Gr{\mathsf{Gr}}
\DeclareMathOperator\Hom{\mathsf{Hom}}

\DeclareMathOperator\HOM{\underline{\mathsf{Hom}}}

\DeclareMathOperator\injdim{injdim}
\DeclareMathOperator\projdim{pdim}

\DeclareMathOperator\supp{supp}
\DeclareMathOperator\Id{Id}

\DeclareMathOperator\Res{\mathsf{Res}}
\DeclareMathOperator\coker{coker}
\DeclareMathOperator\nat{nat}

\title{
Change of grading, injective dimension and dualizing complexes
}

\author{A. Solotar,
P. Zadunaisky\footnote{This work has been supported by the projects 
UBACYT 20020130 100533BA, pip-conicet 11220150100483CO, and 
MATHAMSUD-REPHOMOL. The first named author is a research member of CONICET 
(Argentina). The second named author is a FAPESP PostDoc Fellow, grant: 
2016-25984-1 S\~ao Paulo Research Foundation (FAPESP).}
}
\date{}

\begin{document}
\maketitle

\begin{abstract}
Let $G,H$ be groups, $\phi: G \to H$ a group morphism, and $A$ a $G$-graded 
algebra. The morphism $\phi$ induces an $H$-grading on $A$, and on any 
$G$-graded $A$-module, which thus becomes an $H$-graded $A$-module.
Given an injective $G$-graded $A$-module, we give bounds for its injective
dimension when seen as $H$-graded $A$-module. Following ideas by Van den
Bergh, we give an application of our results to the stability of dualizing
complexes through change of grading.
\end{abstract}

\textbf{2010 MSC:} 16D50, 16E10, 16E65, 16W50, 18G05.

\textbf{Keywords:} injective modules, change of grading, dualizing complexes.

\section{Introduction}
Graded rings are ubiquitous in algebra. One of the main reasons is that the
presence of a grading simplifies proofs and allows to generalize many results
(for example, the theories of commutative and noncommutative graded algebras 
are easier to reconcile than their ungraded counterparts). Furthermore, 
results can often be transfered from the graded to the ungraded context 
through standard techniques. In more categorical terms, there is a natural 
forgetful functor from the category $\Gr_G A$ of graded modules over a 
$G$-graded algebra $A$, to the category $\Mod A$ of modules over $A$, and the 
challenge is to find a way to transfer information in the opposite direction. 
When $G = \ZZ$ this is usually done through ``filtered-and-graded'' arguments 
and spectral sequences. In this article we exploit a different technique, 
namely the existence of three functors $\phi_!, \phi^*, \phi_*$, where 
$\phi_!: \Gr_G A \to \Mod A$ is the usual forgetful functor (sometimes also 
called the \emph{push-down} functor), $\phi^*$ is its right adjoint, and 
$\phi_*$ is the right adjoint of $\phi^*$. 
This technique has two advantages over the usual filtered-and-graded methods, 
namely that it does not depend on the choice of a non-canonical filtration, 
and that the group $G$ is arbitrary. Its main drawback is that the functors in 
this triple do not preserve finite generation, noetherianity, or other 
``finiteness'' properties unless further hypotheses are in place.

The problem we consider is the following. Suppose you are given an injective 
object $I$ in the category $\Gr_\ZZ A$. In general $I$ is not 
injective as $A$-module, but if $A$ is noetherian then 
its injective dimension is at most one. Now, what happens if we consider 
gradings by more general groups? In general, given groups $G, H$ and 
a group morphism $\phi: G \to H$, any $G$-graded object can be seen as an 
$H$-graded object through $\phi$, see paragraph \ref{cog-functors}. In 
particular a $G$-graded algebra $A$ inherits an $H$-grading, and there is a 
natural functor $\phi_!: \Gr_G A \to \Gr_H A$, between the categories of 
$G$-graded and $H$-graded modules. The question thus becomes: given an 
injective object $I$ in $\Gr_G A$, what is the injective dimension of 
$\phi_!(I)$ in $\Gr_H A$?

This question has been considered several times in the literature, but it has 
received no unified treatment. A classical result of R. Fossum and H.-B. Foxby 
\cite{FF-graded}*{Theorem 4.10} states that if $A$ is $\ZZ$-graded noetherian 
and commutative then a $\ZZ$-graded-injective module has injective dimension 
at most $1$. M. Van den Bergh claims in the article 
\cite{VdB-existence-dc}*{below Definition 6.1} that this result extends to the 
noncommutative case if the algebra is $\NN$-graded and $A_0$ is equal to the 
base field; a proof of this fact can be found in the preprint \cite{Yek-note}. 
Other antecedents include \cite{Eks-auslander}, where it is shown that if $A$ 
is a noetherian $\ZZ$-graded algebra then the injective dimension of $A$ is 
finite if and only if its graded injective dimension is finite. Following the 
ideas of \cite{Lev-ncreg}*{section 3}, one can show that if $A$ is 
$\NN$-graded and noetherian, and $M$ is a $\ZZ$-graded module such that $M_n 
= 0$ for $n \ll 0$, then the graded injective dimension of $M$ coincides with 
its injective dimension as $A$-module. Most of these results are obtained by 
the usual route of going from ungraded to graded objects through filtrations 
and spectral sequences. The only result that we could find in the literature 
regarding injective modules graded by groups other than $\ZZ$ states that if 
$A$ is graded over a finite group then a graded module is graded injective if 
and only if it is injective \cite{NV-graded-book3}*{2.5.2}. 

In order to give a general answer to the question we work with the functors 
$\phi_!, \phi^*, \phi_*$ mentioned above, which were originally introduced by 
A. Polishchuk and L. Positselski in \cite{PP-secondHH}. These functors, 
collectively called the \emph{change of 
grading functors}, turn out to be particularly well-adapted to the transfer of 
information of homological nature. Our main result, which includes most of the 
previous ones as special cases, is the following.
\begin{Theorem*}
Let $\phi: G \to H$ be a group morphism, let $L = \ker \phi$ and let $d$ be 
the projective dimension of the trivial $L$-module $\k$. Let $A$ be a 
$G$-graded noetherian algebra, and let $I$ be an injective object of $\Gr_G 
A$. Then the injective dimension of $\phi_!(I)$ is at most $d$. 
\end{Theorem*}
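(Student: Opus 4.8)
The plan is to bound the injective dimension of $\phi_!(I)$ by showing that $\Ext^i_{\Gr_H A}(N,\phi_!(I))=0$ for every $N \in \Gr_H A$ and every $i > d$; since the injective dimension is the supremum of the top non-vanishing degree of these $\Ext$ groups, this gives the claim. The two facts about the change-of-grading functors that drive everything are that $\phi_!$ and $\phi^*$ are exact, so that (as right adjoints of exact functors) both $\phi^*$ and $\phi_*$ preserve injectives. Combined with the exactness of $\phi_!$, the adjunction $\phi_!\dashv\phi^*$ upgrades to a natural isomorphism $\Ext^i_{\Gr_H A}(\phi_!(M),Y)\cong \Ext^i_{\Gr_G A}(M,\phi^*(Y))$ for all $i$, all $M\in\Gr_G A$ and all $Y\in\Gr_H A$.

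The key computation is the following. Writing $S_l$ for the degree-translation autoequivalence of $\Gr_G A$ attached to $l\in L=\ker\phi$, one checks that $\phi_! S_l\cong\phi_!$ and $\phi_* S_l\cong \phi_*$, and that $\phi^*\phi_!\cong\bigoplus_{l\in L}S_l\cong \k L\otimes_\k(-)$, the free $\k L$-object functor, with $\k L$ acting through the regular (translation) action on the summands. In particular $\phi^*\phi_!(I)\cong\bigoplus_{l\in L}S_l(I)$ is a direct sum of injectives of $\Gr_G A$. Here the noetherian hypothesis enters in an essential way, through the graded Bass--Papp theorem, to conclude that this possibly infinite direct sum is again injective in $\Gr_G A$.

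I would then assemble the bound using the comonad $\bot=\phi_!\phi^*$ on $\Gr_H A$, whose counit $\bot\to\Id$ is an epimorphism. The associated bar resolution $\cdots\to\bot^2 N\to\bot N\to N\to 0$ resolves $N$ by objects lying in the image of $\phi_!$. Applying $\Hom_{\Gr_H A}(-,\phi_!(I))$ and using the isomorphism from the first paragraph together with the injectivity of $\phi^*\phi_!(I)$, each term $\bot^k N$ is $\Ext(-,\phi_!(I))$-acyclic, so the bar resolution computes $\Ext^*_{\Gr_H A}(N,\phi_!(I))$. The resulting cochain complex is exactly the standard complex computing the cohomology of $L$ with coefficients in the $\k L$-module $\Hom_{\Gr_G A}(\phi^*(N),\phi^*\phi_!(I))$, so that
\[
\Ext^i_{\Gr_H A}(N,\phi_!(I))\cong H^i\big(L,\ \Hom_{\Gr_G A}(\phi^*(N),\phi^*\phi_!(I))\big).
\]
Since $d=\projdim_{\k L}\k$ is the cohomological dimension of $L$ over $\k$, the right-hand side vanishes for $i>d$, and the theorem follows.

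The hard part is the tension between $\phi_!$, which forms direct sums over the fibers of $\phi$, and $\phi_*$, which forms products: over an infinite kernel $L$ these differ, and the whole argument hinges on the single point that $\phi^*\phi_!(I)=\bigoplus_{l\in L}S_l(I)$ remains injective. This is precisely where noetherianity is indispensable. Verifying that the bar computation degenerates onto honest group cohomology of $L$, rather than onto a genuinely non-trivial spectral sequence, is the step I expect to require the most care.
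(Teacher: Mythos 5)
Your overall architecture matches the paper's at one of its two pillars and genuinely diverges at the other. Both proofs rest on the observation that, by the adjunction $\phi_!\dashv\phi^*$ together with $\phi^*\phi_!(I)\cong\bigoplus_{l\in L}I[l]$ and the graded Bass--Papp theorem, every object in the image of $\phi_!$ is $\Hom_A^H(-,\phi_!(I))$-acyclic --- this is exactly the paper's Lemma \ref{L:acyclic}, and you correctly isolate noetherianity as entering only there. Where you differ is in how you resolve an arbitrary $N\in\Gr_H A$ by such acyclics. The paper builds a \emph{finite} resolution of length at most $d$ by objects of $\mathcal S(N)$, obtained by transporting a projective resolution of $\k[H]$ in $\Gr_H\k[G]\simeq\Mod\k[L]$ through an auxiliary functor $D_N$ (Proposition \ref{P:resolution}), after which the bound is immediate. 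You instead take the \emph{infinite} bar resolution of the comonad $\bot=\phi_!\phi^*$ and identify the complex $\Hom_A^H(\bot^{\bullet+1}N,\phi_!(I))$, via $\Hom_A^H(\bot^{k+1}N,\phi_!(I))\cong\prod_{L^k}V$ with $V=\Hom_A^G(\phi^*(N),\phi^*\phi_!(I))$, with the standard inhomogeneous cochain complex computing $H^*(L,V)=\Ext^*_{\k[L]}(\k,V)$, which vanishes above $d=\projdim_{\k[L]}\k$ no matter how long the resolution is. This identification is correct (and the degeneration onto honest group cohomology, rather than a spectral sequence, is indeed guaranteed by the injectivity of $\phi^*\phi_!(I)$, i.e.\ by Lemma \ref{L:acyclic}). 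In effect you reprove internally the homological input $\projdim^H_{\k[G]}\k[H]=\projdim_{\k[L]}\k$ that the paper imports from Montgomery's equivalence; the paper's finite resolution avoids any manipulation of bar differentials, while your route makes the cohomology of $L$ explicitly visible in $\Ext^*_{\Gr_H A}(N,\phi_!(I))$, which is a nice structural bonus.

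There is, however, one genuine gap: your claim that the counit $\bot N\to N$ is an epimorphism fails when $\phi$ is not surjective, and the theorem is stated for an arbitrary group morphism. Indeed $(\phi_!\phi^*(N))_h=\bigoplus_{g\in\phi^{-1}(h)}N_h\ot\k g$, which is zero for $h\notin\phi(G)$ even when $N_h\neq 0$; for the same reason the augmented bar complex, which is contractible only after applying $\phi^*$, need not be exact, since $\phi^*$ reflects exactness precisely when $\phi$ is onto (see \ref{P:adjoint}). The paper sidesteps this by letting $\mathcal S(N)$ contain all shifts $\phi_!(\phi^*(N[h]))$ with $h\in H$, which reach every coset of $\phi(G)$. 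Your argument is repairable: $\phi_!(A)$ is supported on the subgroup $\phi(G)\leq H$, so $\Gr_H A$ decomposes as a product of blocks indexed by the cosets of $\phi(G)$, and $\phi_!(I)$ lies in the identity block; hence $\R^i\Hom_A^H(N,\phi_!(I))$ only sees the component of $N$ supported on $\phi(G)$, where the counit is epi and the bar complex is exact. Equivalently, factor $\phi$ through its image and dispose of the inclusion $\phi(G)\hookrightarrow H$ (whose kernel is trivial, so it must preserve injectivity) by this same block decomposition. With that one fix, your proof is complete.
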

The proof depends on two facts. First, that if $I$ is 
$G$-graded injective then $\phi_!(I)$ is an injective object in the additive 
subcategory generated by all modules of the form $\phi_!(M)$ with $M$ a 
$G$-graded $A$-module; in other words, modules in the image of $\phi_!$ are 
$\Hom_A^H(-,\phi_!(I))$-acyclic and hence can be used to build acyclic 
resolutions, see Lemma \ref{L:acyclic}. The second is a result of independent 
interest, stating that given an $H$-graded $A$-module $N$ we can obtain a 
resolution of $N$ by objects in the additive category generated by 
$\phi_!(\phi^*(N))$, see Proposition \ref{P:resolution}; this resolution 
can be used to calculate the $H$-graded extension modules between $N$ and 
$\phi_!(I)$, which gives the desired bound. 

\bigskip
The article is structured as follows. In Section \ref{COG-FUNCTORS} we review 
some basic facts on the category of graded modules and recall some general 
properties of the change of grading functors established in the article 
\cite{RZ-twisted}. In Section \ref{COG-INJDIM} we 
prove our main results on how regrading affects injective dimension. Finally 
in Section \ref{COG-DC} we give similar results at the derived level and use 
them to study the behavior of dualizing complexes with respect to regradings,
a question originally raised by Van den Bergh in \cite{VdB-existence-dc}.

\bigskip
Throughout the article $\k$ is a commutative ring, and unadorned $\hom$ 
spaces and tensor products are always over $\k$. Also all modules over rings
are left modules unless otherwise stated. The letters $G, H$ will always denote
groups, and $\phi: G \to H$ will be a group morphism.

\bigskip
\textbf{Acknowledgements:} The authors would like to thank Mariano 
Suárez-Álvarez for a careful reading of a previous version of this article.

\section{The change of grading functors}
\label{COG-FUNCTORS}

\paragraph
\label{G-graded-vector-spaces}
A $G$-graded $\k$-module is a $\k$-module $V$ with a fixed 
decomposition $V = \bigoplus_{g \in G} V_g$; we say that $v \in V$ is 
homogeneous of degree $g$ if $v \in V_g$, and $V_g$ is called the 
$g$-homogeneous component of $V$. We usually say graded instead of $G$-graded 
if $G$ is clear from the context.

Given two $G$-graded modules $V$ and $W$, their tensor product is also a 
$G$-graded module, where for each $g \in G$ 
\begin{align*}
(V \ot W)_g = \bigoplus_{g' \in G} V_ {g'} \ot W_{(g')^{-1}g}
\end{align*}
A map between graded $\k$-modules $f: V \to W$ is said to be 
\emph{$G$-homogeneous}, or simply homogeneous, if $f(V_g) \subset W_g$ for all 
$g \in G$. By definition, a homogeneous map $f: V \to W$ induces maps 
$f_g: V_g \to W_g$ for each $g \in G$, and $f = \bigoplus_{g \in G} f_g$; 
we refer to $f_g$ as the homogeneous component of degree $g$ of $f$. The 
\emph{support} of a $G$-graded $\k$-module $V$ is $\supp V = \{g \in G \mid 
V_g \neq 0\}$.

The category $\Gr_G \k$ has $G$-graded modules as objects and homogeneous 
$\k$-linear maps as morphisms. Kernels and cokernels of homogeneous
maps between graded $\k$-modules are graded in a natural way, so a complex
\[
 0 \to V' \to V \to V'' \to 0
\]
in $\Gr_G \k$ is a short exact sequence if and only if it is a short
exact sequence of $\k$-modules, or equivalently if for each $g \in G$
the sequence formed by taking $g$-homogeneous components is exact.

Given an object $V$ in $\Gr_G \k$ and $g \in G$, we denote by $V[g]$ the
$G$-graded $\k$-module whose homogeneous component of degree $g'$ is
$V[g]_{g'} = V_{g'g}$. This gives a natural autoequivalence of $\Gr_G \k$.

\paragraph
\label{G-graded-algebras}
We now recall the general definitions regarding $G$-graded $\k$-algebras.
The reader is referred to \cite{NV-graded-book3}*{Chapter 2} for proofs and 
details.

A $G$-graded $\k$-algebra is a $G$-graded $\k$-module $A$ which is also a 
$\k$-algebra, such that for all $g,g' \in G$ and all $a \in A_g, a' \in A_{g'}$
we have $aa' \in A_{gg'}$. If $A$ is a $G$-graded algebra then its
\emph{structural map} $\rho: A \to A \ot \k[G]$ is defined as $a \in A_g 
\mapsto a \ot g \in A_g \ot \k[G]_g$ for each $g \in G$; the fact that $A$ is 
a $G$-graded algebra implies that this is a morphism of algebras.

A $G$-graded $A$-module is an $A$-module $M$ which is also a $G$-graded 
$\k$-module such that for each $g,g' \in G$ and all $a \in A_g, 
m \in M_{g'}$ it happens that $am \in M_{gg'}$. Once again, we usually say 
graded instead of $G$-graded. We say that $A$ is graded left noetherian if
every graded $A$-submodule of a finitely generated graded $A$-module is
also finitely generated. If $G$ is a polycyclic-by-finite group then $A$ is
graded noetherian if and only if it is noetherian 
\cite{CQ-polycyclic}*{Theorem 2.2}.

We denote by $\Gr_G A$ the category whose objects are $G$-graded
$A$-modules and whose morphisms are $G$-homogeneous $A$-linear maps. 
Notice that if $M$ is a graded $A$-module then the graded $\k$-module $M[g]$
is also a graded $A$-module, with the same underlying $A$-module structure,
so shifting also induces an autoequivalence of $\Gr_G A$. 

The category $\Gr_G A$ has arbitrary direct sums and products. The direct sum
of graded modules is again graded in an obvious way, but this is not the case
for direct products. Given a collection of graded $A$-modules $\{V^i \mid i 
\in I\}$, their direct product is the graded $A$-module whose homogeneous 
decomposition is given by
\begin{align*}
\bigoplus_{g \in G} \prod_{i \in I} V^i_g.
\end{align*}
In other words, the forgetful functor $\O: \Gr_G A \to \Mod A$ preserves
direct sums, but not direct products.

The category $\Gr_G A$ is a Grothendieck category with enough projective and 
injective objects. Given an object $M$ of $\Gr_G A$, we will denote by 
$\projdim_A^G M$ and $\injdim_A^G M$ its projective and injective dimensions, 
respectively. Given two graded $A$-modules $M, N$ we denote by $\Hom^G_A(M,N)$ 
the $\k$-module of all $G$-homogeneous $A$-linear morphisms from $M$ to $N$. 
Since $\Gr_G A$ has enough injectives, we can define for each $i \geq 0$ the 
$i$-th right derived functor of $\Hom^G_A$, which we denote by $\R^i\Hom^G_A$. 

There is also an enriched homomorphism functor $\HOM_A^G$, given by
\begin{align*}
\HOM_A^G(M,N) = \bigoplus_{g \in G} \Hom_A^G(M,N[g]),
\end{align*}
which is a $G$-graded $\k$-submodule of $\Hom_\k(M,N)$. We denote its right
derived functors by $\R^i\HOM_A^G$.

\paragraph
\label{cog-functors}
Let $A$ be a $G$-graded $\k$-algebra. As shown in 
\cite{RZ-twisted}*{Section 1.3}, a group homomorphism $\phi: G \to H$
induces functors $\phi_!, \phi_*: \Gr_G A \to \Gr_H A$ and $\phi^*: 
\Gr_H A \to \Gr_G A$. We quickly review the construction for completeness.

Let $V$ be a $G$-graded $\k$-module. We define $\phi_!(V)$ to be the 
$H$-graded $\k$-module whose homogeneous component of degree $h \in H$ is 
given by
\begin{align*}
\phi_!(V)_h
 &= \bigoplus_{\{g \in G \mid \phi(g) = h\}} V_g.
\end{align*}
Analogously given a map $f: V \to W$ between $G$-graded $\k$-modules, we
define $\phi_!(f)$ to be the $\k$-linear map whose homogeneous component of 
degree $h \in H$ is given by
\begin{align*}
\phi_!(f)_h
 &= \bigoplus_{\{g \in G \mid \phi(f) = h\}} f_g.
\end{align*}
Notice that $\phi_!(V)$ has the same underlying $\k$-module as $V$. In 
particular, $\phi_!(A)$ is an $H$-graded $\k$-algebra which is equal to $A$ as 
$\k$-algebra, and if $V$ is a $G$-graded $A$-module then $\phi_!(V)$ is an 
$H$-graded $\phi_!(A)$-module with the same underlying $A$-module structure. 
Since the action of $A$ remains unchanged, if $f$ is $A$-linear then so is 
$\phi_!(f)$. This defines the functor $\phi_!: \Gr_G A \to \Gr_H \phi_!(A)$. 
From now on we usually write $A$ instead of $\phi_!(A)$ to lighten up the 
notation, since the context will make it clear whether we are considering it 
as a $G$-graded or as an $H$-graded algebra.

We define $\phi_*(V)$ and $\phi_*(f)$, to be the $H$-graded $\k$-module,
and $H$-homogeneous map whose homogeneous components of degree $h \in H$ 
are given by
\begin{align*}
\phi_*(V)_h
 &= \prod_{\{g \in G \mid \phi(g) = h\}} V_g,
&\phi_*(f)_h
 &= \prod_{\{g \in G \mid \phi(f) = h\}} f_g,
\end{align*}
respectively. If $V$ is also an $A$-module, we define the action of a 
homogeneous element $a \in A_{g'}$ with $g' \in G$ over an element 
$(v_g)_{g \in \phi^{-1}(h)} \in \phi_*(V)_h$ as $a(v_g) = (av_g)$. With this 
action $\phi_*(V)$ becomes an $H$-graded $A$-module, and we have defined the 
functor $\phi_*: \Gr_G A \to \Gr_H A$.

Now let $V',W'$ be $H$-graded $\k$-modules and let $f': V' \to W'$ be a 
homogeneous map. We set $\phi^*(V') \subset V' \ot \k[G]$ to be the 
subspace generated by all elements of the form $v \ot g$ with $v \in V'$ 
homogeneous of degree $\phi(g)$, and $\phi^*(f)(v \ot g) = f(v) \ot g$. In 
other words, for each $g \in G$ the homogeneous components of $\phi^*(V')$
and $\phi(f')$ of degree $g$ are given by
\begin{align*}
\phi^*(V')_g 
 &= V'_{\phi(g)} \ot \k g,
 &f_g
 &= f_{\phi(g)} \ot \Id.
\end{align*}
If $V'$ is an $H$-graded $A$-module, then $V' \ot \k[G]$ is an $A \ot 
\k[G]$-module, and it is an induced $A$-module through the structure map 
$\rho: A \to A \ot \k[G]$; it is immediate to check that with this action it 
becomes a $G$-graded $A$-module with $(V' \ot \k[G])_g = V' \ot \k g$ for 
each $g \in G$, and that $\phi^*(V') \subset V' \ot \k[G]$ is a $G$-graded 
$A$-submodule. It is also easy to check that if $f'$ is homogeneous and 
$A$-linear then so is $\phi^*(f')$. Thus we have defined a functor $\phi^*: 
\Gr_H A \to \Gr_G A$.

\paragraph
\label{P:adjoint}
We refer to $\phi_!, \phi^*$ and $\phi_*$ collectively as the \emph{change of
grading functors}. It is clear from the definitions that the change of grading 
functors are exact, and that $\phi_!, \phi_*$ reflect exactness, i.e. a complex
is exact if and only if its image by any of them is also exact. The functor 
$\phi^*$ reflects exactness if and only if $\phi$ is surjective. As mentioned 
before, we have some adjointness relations between these functors. 
\begin{Proposition*}[\cite{RZ-twisted}*{Proposition 3.2.1}]
The functor $\phi^*$ is right adjoint to $\phi_!$ and left adjoint to $\phi_*$.
\end{Proposition*}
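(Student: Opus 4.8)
The plan is to establish both adjunctions by directly exhibiting the natural isomorphisms of Hom-spaces
\[
\Hom_A^H(\phi_!(V), W') \cong \Hom_A^G(V, \phi^*(W')),
\qquad
\Hom_A^G(\phi^*(W'), V) \cong \Hom_A^H(W', \phi_*(V)),
\]
working first at the level of the underlying $\k$-modules and only then checking compatibility with the $A$-action. The key observation throughout is that a homogeneous $A$-linear map between graded modules is completely determined by its homogeneous components, and that all three change of grading functors are built by regrouping, taking products of, or tensoring the homogeneous components of their argument along the fibers $\phi^{-1}(h)$ of $\phi$.

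For the first adjunction, fix $V$ in $\Gr_G A$ and $W'$ in $\Gr_H A$. A homogeneous $A$-linear map $f \colon \phi_!(V) \to W'$ is determined by its restrictions to the homogeneous components of $V$; since $V_g$ lies inside $\phi_!(V)_{\phi(g)}$, homogeneity forces $f(V_g) \subseteq W'_{\phi(g)}$, so $f$ amounts to a family of $\k$-linear maps $\{f_g \colon V_g \to W'_{\phi(g)}\}_{g \in G}$. Using the identification $\phi^*(W')_g = W'_{\phi(g)} \ot \k g \cong W'_{\phi(g)}$, a homogeneous map $V \to \phi^*(W')$ is described by exactly the same data, and I would define the correspondence explicitly by $\tilde f(v) = f(v) \ot g$ for $v \in V_g$, with inverse obtained by composing with the projection $\phi^*(W') \hookrightarrow W' \ot \k[G] \to W'$ that forgets the group-algebra factor.

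For the second adjunction, fix $W'$ in $\Gr_H A$ and $V$ in $\Gr_G A$. A homogeneous map $\phi^*(W') \to V$ is again a family $\{W'_{\phi(g)} \to V_g\}_{g \in G}$ once the factor $\ot \k g$ is cancelled. On the other side, since $\phi_*(V)_h = \prod_{g \in \phi^{-1}(h)} V_g$, the universal property of the product identifies a homogeneous map $W' \to \phi_*(V)$ with a family of $\k$-linear maps $\{W'_h \to V_g\}_{g \in \phi^{-1}(h)}$, that is, once more a family $\{W'_{\phi(g)} \to V_g\}_{g \in G}$. Matching these families gives the desired bijection.

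In both cases the remaining work, and the only real subtlety, is to verify that these bijections restrict to $A$-linear maps on both sides and are natural in each variable. The point to handle carefully is the interaction between the $A$-action and the grading bookkeeping carried by the $\k[G]$-factor in $\phi^*$: since the $A$-module structure on $\phi^*(W') \subseteq W' \ot \k[G]$ is induced through the structural map $\rho$, one has $a \cdot (w \ot g) = aw \ot g'g$ for $a \in A_{g'}$, so that for $v \in V_g$ the computation $\tilde f(av) = f(av) \ot g'g = a\, f(v) \ot g'g = a\, \tilde f(v)$ shows that $A$-linearity of $f$ transfers to $A$-linearity of $\tilde f$; the analogous verification for $\phi_*$ instead uses that its $A$-action is defined componentwise. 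Naturality in both variables then follows by unwinding the definitions, which I expect to be routine.
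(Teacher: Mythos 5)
Your proposal is correct and follows essentially the same route as the paper: your map $\tilde f(v) = f(v) \ot g$ is exactly the paper's $\alpha$, your projection forgetting the $\k[G]$-factor is the paper's composition with the counit $\epsilon$, and your identification of maps into $\phi_*(V)$ with families $\{W'_{\phi(g)} \to V_g\}$ via the universal property of the product matches the paper's $\gamma$ and $\delta$. You even spell out the $A$-linearity check through the structural map $\rho$, which the paper compresses into ``direct computation.''
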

\begin{proof}
Let $M$ be an object of $\Gr_G A$ and $N$ an object of $\Gr_H A$. We define 
maps
\begin{align*}
\xymatrix{
	\Hom_A^H(\phi_!(M), N) \ar@/^6pt/[r]^-\alpha
		& \Hom_A^G(M, \phi^*(N)) \ar@/^6pt/[l]^-\beta
}
\end{align*}
as follows. Given $f: \phi_!(M) \to N$, for each $g \in G$ and each $m \in M_g$
set $\alpha(f)(m) = f(m) \ot g$. Conversely, given $f: M \to \phi^*(N)$, let 
$\epsilon: \k[G] \to \k$ be the counit of $\k[G]$, i.e. the algebra map 
defined by setting $\epsilon(g) = 1$, and set $\beta(f) = 1 \ot \epsilon 
\circ f$. Direct computation shows that these maps are well defined, natural,
and mutual inverses. Thus $\phi_!$ is the left adjoint of $\phi^*$.

Now we define maps 
\begin{align*}
\xymatrix{
	\Hom_A^G(\phi^*(N), M) \ar@/^6pt/[r]^-\gamma
		& \Hom_A^H(N, \phi_*(M)) \ar@/^6pt/[l]^-\delta
}
\end{align*}
as follows. Given $f: \phi^*(N) \to M$, for each $h \in H$ and each $n \in N_h$
we set $\gamma(f)(n) = (f(n \ot g))_{g \in \phi^{-1}(h)}$. Conversely, given
$f: N \to \phi_*(M)$, for each $g \in G$ and $n \in N_{\phi(g)}$ we have $f(n)
\in \prod_{g' \in \phi^{-1}(h)} M_{g'}$, so we can set $\delta(f)(n \ot g)$ as 
the $g$-th component of $f(n)$. Once again direct computation shows that these 
maps are well defined, natural, and mutual inverses.
\end{proof}

\section{Injective dimension and change of grading}
\label{COG-INJDIM}
Recall that $G,H$ are groups and $\phi: G \to H$ is a group morphism. We set 
$L = \ker \phi$. Throughout this section $A$ denotes a $G$-graded $\k$-algebra.

\paragraph
\label{hom-dim-inequalities}
As stated in the Introduction, a $G$-graded $A$-module is projective if and 
only if it is projective as $A$-module, i.e. the functor $\phi_!$ preserves 
the projective dimension of an object. Our aim is to describe how $\phi_!$ 
affects the injective dimension of an object. We begin by recalling a previous 
result related to this problem.
\begin{Proposition*}[\cite{RZ-twisted}*{Corollaries 3.2.2, 3.2.3}]
Let $M$ be an object of $\Gr_G A$. Then the following hold.
\begin{enumerate}
\item $\projdim^G_A M = \projdim^H_A \phi_!(M)$ and $\injdim^G_A M \leq 
\injdim^H_A \phi_!(M)$. 

\item $\projdim^G_A M \leq \projdim^H_A \phi_*(M)$ and $\injdim^G_A M = 
\injdim^H_A \phi_*(M)$. 
\end{enumerate}
\end{Proposition*}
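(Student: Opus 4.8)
The plan is to reduce all four statements to three ingredients: the exactness of $\phi_!, \phi^*, \phi_*$ recorded in \ref{P:adjoint}; the standard principle that a left adjoint of an exact functor preserves projectives while a right adjoint of an exact functor preserves injectives; and the observation that $M$ is a direct summand, inside $\Gr_G A$, of both $\phi^*\phi_!(M)$ and $\phi^*\phi_*(M)$. Granting these, each inequality follows by transporting a resolution through an exact functor and using that the homological dimension of a direct summand is bounded above by that of the ambient object.

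First I would extract the preservation properties from the adjunctions. Since $\phi^*$ is exact, the isomorphism $\Hom_A^G(\phi^*(-),M) \cong \Hom_A^H(-,\phi_*(M))$ shows that $\phi_*$ sends injectives to injectives, and the isomorphism $\Hom_A^H(\phi_!(P),-) \cong \Hom_A^G(P,\phi^*(-))$ shows that $\phi_!$ sends projectives to projectives. Dually, because $\phi_!$ and $\phi_*$ are exact, $\phi^*$ preserves both projectives (as the left adjoint of $\phi_*$) and injectives (as the right adjoint of $\phi_!$). With these in hand the inequalities $\projdim_A^H \phi_!(M) \le \projdim_A^G M$ and $\injdim_A^H \phi_*(M) \le \injdim_A^G M$ are immediate: apply $\phi_!$ to a projective resolution of $M$, respectively $\phi_*$ to an injective resolution of $M$, and observe that the image is a resolution of the same length by objects of the correct type.

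For the reverse bounds I would use the retract observation. The unit $\eta_M \colon M \to \phi^*\phi_!(M)$ of the adjunction $\phi_! \dashv \phi^*$, which on $M_g$ sends $m \mapsto m \ot g$, is a split monomorphism in $\Gr_G A$; dually the counit $\epsilon_M \colon \phi^*\phi_*(M) \to M$ of $\phi^* \dashv \phi_*$ is a split epimorphism. In either case $M$ is a direct summand of the image under $\phi^*$ of $\phi_!(M)$, respectively $\phi_*(M)$. Applying the exact, projective- and injective-preserving functor $\phi^*$ to a resolution of $\phi_!(M)$ (resp. $\phi_*(M)$) of length $n$ yields a resolution of $\phi^*\phi_!(M)$ (resp. $\phi^*\phi_*(M)$) of length $n$, so the summand $M$ has homological dimension at most $n$. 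This gives $\projdim_A^G M \le \projdim_A^H \phi_!(M)$ and $\injdim_A^G M \le \injdim_A^H \phi_*(M)$, completing the two equalities, and the remaining inequalities $\injdim_A^G M \le \injdim_A^H \phi_!(M)$ and $\projdim_A^G M \le \projdim_A^H \phi_*(M)$ come out identically, now using that $\phi^*$ also preserves injectives and projectives respectively.

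The only point requiring genuine verification, and hence the main obstacle, is that $\eta_M$ and $\epsilon_M$ split inside $\Gr_G A$ rather than merely in $\Gr_G \k$. I would exhibit the retraction of $\eta_M$ explicitly: on the degree-$g$ component $\phi^*\phi_!(M)_g = \bigl(\bigoplus_{\phi(g')=\phi(g)} M_{g'}\bigr) \ot \k g$ it projects onto the distinguished summand $M_g \ot \k g$ and then sends $m \ot g \mapsto m$, the section of $\epsilon_M$ being defined dually. The content is to check $A$-linearity, which amounts to tracking the induced-module action: a homogeneous element $a \in A_{g''}$ acts by $a \cdot (m \ot g) = am \ot g''g$ and therefore carries the distinguished summand of degree $g$ into the distinguished summand of degree $g''g$, so the projection commutes with the $A$-action. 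Once this computation is confirmed the remainder of the argument is formal.
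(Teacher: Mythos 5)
Your proof is correct: all four bounds follow, as you argue, from the adjunctions $\phi_! \dashv \phi^* \dashv \phi_*$ of \ref{P:adjoint}, the exactness of all three functors (so that $\phi_!$ and $\phi^*$ preserve projectives, $\phi^*$ and $\phi_*$ preserve injectives), and the fact that the unit $M \to \phi^*\phi_!(M)$ and the counit $\phi^*\phi_*(M) \to M$ split in $\Gr_G A$, a point you rightly single out and verify by the explicit $A$-linearity computation. The paper itself gives no proof, importing the statement from \cite{RZ-twisted}*{Corollaries 3.2.2, 3.2.3}, but your route is the standard one and matches the machinery the paper develops: your retraction of the unit is precisely the projection onto the $l = e$ summand of the decomposition $\phi^*(\phi_!(M)) \cong \bigoplus_{l \in L} M[l]$ that the paper records just before Lemma \ref{L:acyclic}.
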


\paragraph
\label{phi-finite}
The natural inclusion of the direct sum of a family into its 
product gives rise to a natural transformation $\eta: \phi_! \Rightarrow 
\phi_*$. Notice that $\eta(M): \phi_!(M) \to \phi_*(M)$ is an isomorphism if 
and only if for each $h \in H$ the set $\supp M \cap \phi^{-1}(h)$ is finite. 
If this happens we say that $M$ is \emph{$\phi$-finite}. The following theorem
follows immediately from Proposition \ref{hom-dim-inequalities}. 
\begin{Theorem*}
If an object $M$ of $\Gr_G A$ is $\phi$-finite then $\injdim^G_A M = 
\injdim_A^H \phi_!(M)$. 
\end{Theorem*}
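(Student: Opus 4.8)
The plan is to reduce the claimed equality to the already-established inequalities of Proposition \ref{hom-dim-inequalities} by exploiting the natural transformation $\eta: \phi_! \Rightarrow \phi_*$ introduced in paragraph \ref{phi-finite}. The key observation is that $\phi$-finiteness is by definition the condition under which $\eta(M): \phi_!(M) \to \phi_*(M)$ is an isomorphism, so under our hypothesis the two $H$-graded $A$-modules $\phi_!(M)$ and $\phi_*(M)$ coincide up to isomorphism. Everything then follows by stringing together known facts.

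First I would record that the injective dimension is an isomorphism invariant in the Grothendieck category $\Gr_H A$, so $\phi$-finiteness immediately yields $\injdim_A^H \phi_!(M) = \injdim_A^H \phi_*(M)$. Next I would invoke the equality in part (b) of Proposition \ref{hom-dim-inequalities}, namely $\injdim^G_A M = \injdim^H_A \phi_*(M)$. Chaining these two identities gives $\injdim^G_A M = \injdim^H_A \phi_!(M)$, which is exactly the assertion. Equivalently, one can route the argument through part (a): the inequality $\injdim^G_A M \le \injdim^H_A \phi_!(M)$ together with $\injdim^H_A \phi_!(M) = \injdim^H_A \phi_*(M) = \injdim^G_A M$ squeezes out the equality.

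I do not expect any genuine obstacle here: the entire content of the argument is packaged in the characterization of $\phi$-finiteness via $\eta$ and in Proposition \ref{hom-dim-inequalities}, so the theorem is a direct consequence of the preceding results. The only point deserving even a word of care is the observation that isomorphic objects share the same injective dimension, which is immediate. This is why the statement can legitimately be presented as following at once from the homological inequalities.
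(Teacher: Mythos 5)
Your proof is correct and is exactly the argument the paper intends: since $\phi$-finiteness means $\eta(M): \phi_!(M) \to \phi_*(M)$ is an isomorphism, the equality $\injdim^G_A M = \injdim^H_A \phi_*(M)$ from Proposition \ref{hom-dim-inequalities}(b) immediately transfers to $\phi_!(M)$. The paper states the theorem as following ``immediately'' from that proposition, and your chain of identities is precisely the omitted verification.
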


\begin{Remark*}
If $|L| < \infty$ then every $G$-graded $A$-module is $\phi$-finite. Also, if 
$A$ is $\phi$-finite then every finitely generated $G$-graded $A$-module is 
$\phi$-finite, so this result applies in many usual situations. For example, 
assume $A$ is $\NN^r$-graded for some $r > 0$, i.e. $A$ is $\ZZ^r$-graded
and $A_\xi = 0$ if $\xi \notin \NN^r$. Let $\psi: \ZZ^r \to \ZZ$ 
be the morphism $\psi(z_1, \ldots, z_r) = z_1 + \cdots + z_r$. Then $\psi_!(A)$
is $\ZZ$-graded, and furthermore $A_z = 0$ if $z \notin \NN$. Since for each 
$z \in \NN$ the set $\psi^{-1}(z) \cap \NN^r$ is finite, the algebra $A$ is 
$\psi$-finite. Applying the theorem we see that $\injdim_A^{\ZZ^r} A = 
\injdim_A^\ZZ \psi_!(A)$. If $A$ is also noetherian then by 
\cite{Lev-ncreg}*{3.3 Lemma} we see that $\injdim_A^{\ZZ^r} A = \injdim_A A$.
\end{Remark*}

\paragraph
The algebra $\k[G]$ is a $G$-graded $\k$-algebra, and hence
through $\phi$ it is also an $H$-graded algebra, so we may consider the 
category of $H$-graded $\k[G]$-modules $\Gr_H \k[G]$. The algebra 
$\k[H]$ is an object in this category with its usual $H$-grading and the 
action of $\k[G]$ induced by $\phi$. By \cite{Mont-hopf-book}*{Theorem 8.5.6}, 
the functor $- \ot \k[H]: \Mod \k[L] \to \Gr_H \k[G]$ is an equivalence of 
categories. In particular the projective dimension of $\k[H]$ in $\Gr_H \k[G]$
equals $\projdim_{\k[L]} \k$.

\paragraph
\label{P:resolution}
Given an object $N$ of $\Gr_H A$ we denote by $\mathcal S(N)$ the smallest 
subclass of objects of $\Gr_H A$ containing the set $\{\phi_!(\phi^*(N[h])) 
\mid h \in H\}$ and closed under direct sums and direct summands.
\begin{Proposition*}
Set $d = \projdim_{\k[G]}^H \k[H] = \projdim_{\k[L]} \k$. Every $H$-graded 
$A$-module $N$ has a resolution of length at most $d$ by objects of 
$\mathcal S(N)$.
\end{Proposition*}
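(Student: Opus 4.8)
The plan is to realize $N$ as the value at the trivial module of an exact functor from $\k[L]$-modules to $\Gr_H A$ that carries free modules into $\mathcal S(N)$, and then to apply this functor to a projective resolution of $\k$ over $\k[L]$. The first step is to analyze the object $\phi_!(\phi^*(N))$. Unwinding the definitions in \ref{cog-functors}, its homogeneous component of degree $h$ is $N_h \ot \k[\phi^{-1}(h)]$, and right multiplication by $L$ on the $\k[G]$-factor of $N \ot \k[G]$ equips $\phi^*(N)$, and hence $\phi_!(\phi^*(N))$, with a right $\k[L]$-action. Since $L = \ker\phi$ this action preserves the $H$-grading, and because $A$ acts through the structural map $\rho$ by left multiplication on the $\k[G]$-factor, the right $\k[L]$-action commutes with the $A$-action; thus $\phi_!(\phi^*(N))$ is a right $\k[L]$-module object of $\Gr_H A$. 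Moreover each fibre $\phi^{-1}(h)$ is a free right $L$-set, so $\k[\phi^{-1}(h)]$ is free of rank one over $\k[L]$ and $\phi_!(\phi^*(N))$ is free as a right $\k[L]$-module.

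Next I would define a functor $T \colon \Mod\k[L] \to \Gr_H A$ by $T(X) = \phi_!(\phi^*(N)) \ot_{\k[L]} X$; the $H$-grading and left $A$-module structure descend to the tensor product because the right $\k[L]$-action is homogeneous and $A$-linear, so $T$ indeed takes values in $\Gr_H A$. By construction $T(\k[L]) = \phi_!(\phi^*(N)) = \phi_!(\phi^*(N[e]))$ lies in $\mathcal S(N)$, and since $T$ preserves direct sums while $\mathcal S(N)$ is closed under direct sums and summands, $T$ carries every free, and hence every projective, $\k[L]$-module into $\mathcal S(N)$. The key identification is $T(\k) \cong N$ for the trivial module: the counit $\phi_!(\phi^*(N)) \to N$ of the adjunction $\phi_! \dashv \phi^*$ of \ref{P:adjoint} sends $v \ot g \maps v$, and on the degree-$h$ component it is precisely the map $N_h \ot \k[\phi^{-1}(h)] \to N_h$ obtained by applying $- \ot_{\k[L]} \k$, so it induces a natural isomorphism $\phi_!(\phi^*(N)) \ot_{\k[L]} \k \xrightarrow{\ \sim\ } N$ in $\Gr_H A$.

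Finally I would take a free resolution $0 \to P_d \to \cdots \to P_0 \to \k \to 0$ of the trivial $\k[L]$-module $\k$ of length $d = \projdim_{\k[L]} \k$ (if $d = \infty$ there is nothing to prove) and apply $T$. The point that must be checked is that $T$ preserves exactness of this particular complex, since $T = \phi_!(\phi^*(N)) \ot_{\k[L]} (-)$ is only right exact a priori. Here I would use that each $P_i$, being $\k[L]$-free, is free over $\k$, so the augmented complex is a bounded exact complex of free $\k$-modules and is therefore contractible as a complex of $\k$-modules. Since $T$ is a $\k$-linear additive functor it preserves the $\k$-linear contracting homotopy, so $T$ applied to the resolution remains exact. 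Using $T(\k) \cong N$ this yields an exact sequence $0 \to T(P_d) \to \cdots \to T(P_0) \to N \to 0$ in $\Gr_H A$ with every $T(P_i) \in \mathcal S(N)$, which is the desired resolution of length at most $d$.

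I expect the main obstacle to be exactly this last exactness point. Because $\k$ is an arbitrary commutative ring, $\phi_!(\phi^*(N))$ need not be flat as a right $\k[L]$-module, as the components $N_h$ may fail to be $\k$-flat; one therefore cannot simply invoke flatness of $T$, and must instead show that the chosen resolution splits over $\k$, which is what forces $T$ to preserve its exactness. The remaining verifications — that the right $\k[L]$-action is well defined, $H$-homogeneous and $A$-linear, and that the counit of $\phi_! \dashv \phi^*$ computes the coinvariants $- \ot_{\k[L]} \k$ — are routine bookkeeping with the definitions of the change of grading functors.
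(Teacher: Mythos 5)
Your construction is, up to the equivalence $\Mod \k[L] \simeq \Gr_H \k[G]$ that the paper itself invokes, the same as the paper's: the paper defines $D_N(V) = \bigoplus_{h \in H} N_h \ot V_h$ on $\Gr_H \k[G]$ and applies it to a projective resolution of $\k[H]$, and your $T = \phi_!(\phi^*(N)) \ot_{\k[L]} (-)$ is the composite of $D_N$ with that equivalence, so the identifications $T(\k[L]) \cong \phi_!(\phi^*(N))$, $T(\text{projectives}) \in \mathcal S(N)$ and $T(\k) \cong N$ via the counit are the right skeleton. (Both your counit isomorphism and the paper's membership claim for the shifted modules $\phi_!(\phi^*(N[h_i^{-1}]))[h_i]$ silently use that the fibres $\phi^{-1}(h)$ are nonempty, i.e.\ surjectivity of $\phi$; since the paper is equally cavalier there, I set that aside.) The genuine gap is in your final exactness step. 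You write that ``since $T$ is a $\k$-linear additive functor it preserves the $\k$-linear contracting homotopy,'' but $T$ is a functor on $\Mod \k[L]$ and the homotopy maps $s^i: P_i \to P_{i+1}$ are only $\k$-linear, not $\k[L]$-linear, so $T(s^i)$ is simply undefined. Moreover the principle you invoke is false: take $\k$ a field, $L = \ZZ$, so $\k[L] = \k[t,t^{-1}]$, and the resolution
\begin{align*}
0 \to \k[t,t^{-1}] \xrightarrow{\;t-1\;} \k[t,t^{-1}] \to \k \to 0,
\end{align*}
which is a bounded exact complex of free $\k$-modules, hence $\k$-contractible; applying the $\k$-linear additive functor $- \ot_{\k[L]} \k$ gives $0 \to \k \xrightarrow{\;0\;} \k \to \k \to 0$, which is not exact. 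In general, exactness of $T(P^\bullet) \to N$ is exactly the vanishing of $\Tor^{\k[L]}_{\geq 1}(\phi_!(\phi^*(N)), \k)$, and this is not a formal consequence of $\k$-splitness of $P^\bullet$.

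The repair uses structure you already identified but then misstated. Your claim that $\phi_!(\phi^*(N))$ is \emph{free} as a right $\k[L]$-module is wrong (and you contradict it yourself in the last paragraph when you concede it need not even be flat): since each nonempty fibre $\phi^{-1}(h)$ is a single free right $L$-orbit, what is true is that $\phi_!(\phi^*(N)) \cong W \ot_\k \k[L]$ as right $\k[L]$-modules, with $W = \bigoplus_{h} N_h$, i.e.\ it is \emph{extended} along $\k \to \k[L]$, which is strictly weaker than free when the $N_h$ are not $\k$-projective. This is what saves the argument: it gives a natural isomorphism $T(X) = (W \ot_\k \k[L]) \ot_{\k[L]} X \cong W \ot_\k X$ on underlying $\k$-modules, and the functor $W \ot_\k (-)$, being defined on all of $\Mod \k$, genuinely does carry the $\k$-linear contracting homotopy of $P^\bullet \to \k$ to a contracting homotopy of $W \ot_\k P^\bullet \to W \ot_\k \k$. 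Hence $T(P^\bullet) \to N$ is exact, and since exactness in $\Gr_H A$ is detected on underlying $\k$-modules, your resolution works. This substitution plays exactly the role of the paper's remark that projective $\k[G]$-modules are projective over $\k$, so $D_N$ applied to the resolution stays exact; with it, your proof closes and is essentially the paper's argument in different clothing.
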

\begin{proof}
We begin by defining a functor $D_N: \Gr_H \k[G] \to \Gr_H A$.
Given an object $V$ of $\Gr_H \k[G]$, the tensor product $N \ot V$ is an
$A$-module with action induced by the map $\rho: A \to A \ot \k[G]$, and
we set $D_N(V)$ to be the $A$-submodule $\bigoplus_{h \in H} N_h \ot V_h$, 
with the obvious $H$-grading. Given a morphism $f: V \to W$ in $\Gr_H \k[G]$, 
we set $D_N(f)$ as the restriction and correstriction of $\Id_N \ot f$.

Fix $h \in H$. By definition $D_N(\k[G][h])$ and $\phi_!(\phi^*(N[h^{-1}]))[h]$
are $A$-submodules of $N \ot \k[G]$, and it is immediate to check that in both 
cases the homogeneous component of degree $h' \in H$ is $N_h \ot \k[G]_{hh'}$, 
so in fact these two $H$-graded $A$-modules are equal. Furthermore, if $P$ is 
any projective object in $\Gr_H \k[G]$ then there exists an object $Q$ such 
that $P \oplus Q$ is a free $H$-graded $\k[G]$-module, which is isomorphic to 
$\bigoplus_{i \in I} (\k[G])[h_i]$ for some index set $I$, not necessarily 
finite, with $h_i \in H$. Now $D_N$ commutes with direct summs, $D_N(P)$ is a 
direct summand of $D_N(P \oplus Q) \cong \bigoplus_{i \in I} D_N(\k[G][h_i]) = 
\bigoplus_{i \in I} \phi_!(\phi^*(N[h_i^{-1}]))[h_i]$, which obviously lies in 
$\mathcal S(N)$. 

For each $h \in H$ we define a map $n \in N_h \mapsto n \ot h \in 
D_N(\k[H])_h$; the direct sum of these maps gives us an isomorphism $N \cong 
D_N(\k[H])$. Taking a projective resolution $P^\bullet$ of $\k[H]$ of 
length $d$ and applying $D_N$, we obtain a complex $D_N(P^\bullet) \to 
D_N(\k[H]) \cong N$; since $\k[G]$ is a free $\k$-module, projective 
$\k[G]$-modules are projective over $\k$ so this is an exact complex, and
from the previous paragraph we see that it is a resolution of $N$ by objects
in $\mathcal S(N)$.
\end{proof}

\paragraph
Let $M$ be a $G$-graded $A$-module. Recall that $\phi^*(\phi_!(M)) \subset M 
\ot \k[G]$ consists of all $m \ot g'$ with $m \in M_{g}$ and $\phi(g) = 
\phi(g')$. For each $l \in L$ we have a map $M[l] \to \phi^*\phi_!(M)$ whose 
homogeneous component of degree $g \in G$ is given by $m \in M[l]_g \mapsto m 
\ot gl \in \phi^*\phi_!(M)$. This induces a natural map $\bigoplus_{l \in L} 
M[l] \to \phi^*\phi_!(M)$. This map has an inverse, given by $m \ot g' \in 
\phi^*(\phi_!(M))\mapsto m \in M[g^{-1}g']$, so we get a natural isomorphism 
$\phi^*(\phi_!(M)) \cong \bigoplus_{l \in L} M[l]$. This observation is used 
in the following lemma.

\label{L:acyclic}
\begin{Lemma*}
Assume $A$ is left $G$-graded noetherian. Let $I, M$ be objects of $\Gr_G A$ 
with $I$ injective, and let $N$ be a direct summand of $\phi_!(M)$. Then 
$\R^i\Hom_A^H(N, \phi_!(I)) = 0$ for all $i > 0$.
\end{Lemma*}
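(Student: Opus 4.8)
The plan is to reduce to the case $N = \phi_!(M)$ and then use the adjunction $\phi_! \dashv \phi^*$ from \ref{P:adjoint} to transport the whole computation back into $\Gr_G A$, where the injectivity of $I$ can be brought to bear. First I would dispose of the direct summand: the derived functor $\R^i\Hom_A^H(-,\phi_!(I))$ is computed by resolving the second variable injectively, and it is contravariant and additive in the first, so a splitting $\phi_!(M) \cong N \oplus N'$ induces $\R^i\Hom_A^H(\phi_!(M),\phi_!(I)) \cong \R^i\Hom_A^H(N,\phi_!(I)) \oplus \R^i\Hom_A^H(N',\phi_!(I))$. Thus the desired vanishing for $N$ follows once it is established for $\phi_!(M)$, and I will prove the latter.

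The key structural fact is that $\phi^*$, being right adjoint to the exact functor $\phi_!$, preserves injective objects, and that $\phi^*$ is itself exact (see \ref{P:adjoint}). I would take an injective resolution $J^\bullet$ of $\phi_!(I)$ in $\Gr_H A$ and apply the adjunction isomorphism $\Hom_A^H(\phi_!(M),-) \cong \Hom_A^G(M,\phi^*(-))$ termwise, obtaining
$$\Hom_A^H(\phi_!(M),J^\bullet) \cong \Hom_A^G(M,\phi^*(J^\bullet)).$$
Since $\phi^*$ is exact, $\phi^*(J^\bullet)$ is a resolution of $\phi^*\phi_!(I)$, and since $\phi^*$ preserves injectives it is in fact an injective resolution. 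Taking cohomology then gives a natural isomorphism $\R^i\Hom_A^H(\phi_!(M),\phi_!(I)) \cong \R^i\Hom_A^G(M,\phi^*\phi_!(I))$ for every $i$; this is nothing but the degenerate Grothendieck spectral sequence of the composite $\Hom_A^G(M,-)\circ\phi^*$.

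It remains to show that $\phi^*\phi_!(I)$ is injective in $\Gr_G A$, for then the right-hand side vanishes in positive degrees and the result follows by combining with the first paragraph. By the natural isomorphism established in the paragraph immediately preceding the statement, applied to $I$, we have $\phi^*\phi_!(I) \cong \bigoplus_{l \in L} I[l]$. Each shift $I[l]$ is injective because shifting is an autoequivalence of $\Gr_G A$, so everything reduces to whether an arbitrary coproduct of injective objects is again injective. This is exactly the point at which the hypothesis that $A$ is left $G$-graded noetherian enters, and I expect it to be the main obstacle: graded noetherianity makes $\Gr_G A$ a locally noetherian Grothendieck category, in which arbitrary direct sums of injectives are injective. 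The subtlety worth flagging is that $L$ may well be infinite, so no finiteness shortcut is available and the locally noetherian argument is genuinely required. Granting it, $\bigoplus_{l \in L} I[l]$ is injective, hence so is $\phi^*\phi_!(I)$, whence $\R^i\Hom_A^G(M,\phi^*\phi_!(I)) = 0$ for all $i > 0$, and therefore $\R^i\Hom_A^H(N,\phi_!(I)) = 0$ for all $i > 0$, as claimed.
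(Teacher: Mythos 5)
Your proposal is correct and takes essentially the same approach as the paper: both arguments rest on the adjunction $\phi_! \dashv \phi^*$, the isomorphism $\phi^*(\phi_!(I)) \cong \bigoplus_{l \in L} I[l]$, and the graded Bass--Papp consequence of left $G$-graded noetherianity (injectivity of the direct sum of shifted copies of $I$). The only difference is technical bookkeeping: you realize the identification $\R^i\Hom_A^H(\phi_!(M),\phi_!(I)) \cong \R^i\Hom_A^G(M,\phi^*\phi_!(I))$ by transporting an injective resolution of $\phi_!(I)$ through the exact, injective-preserving functor $\phi^*$, whereas the paper obtains it from the naturality of the adjunction isomorphism in the first variable.
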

\begin{proof}
It is enough to show that the result holds for $N = \phi_!(M)$. In that case 
we have isomorphisms
\begin{align*}
\Hom_A^H(\phi_!(M), \phi_!(I)) 
 &\cong \Hom_A^G(M, \phi^*(\phi_!(I)))
 \cong \Hom_A^G\left(M, \bigoplus_{l \in L} I[l] \right).
\end{align*}
Since this isomorphism is natural in the first variable, we obtain for each 
$i \geq 0$ an isomorphism
\begin{align*}
\R^i \Hom_A^H(\phi_!(M), \phi_!(I)) 
 &\cong\R^i\Hom_A^G\left(M, \bigoplus_{l \in L} I[l] \right).
\end{align*}
Now by the graded version of the Bass-Papp Theorem (see 
\cite{GW-noetherian-book}*{Theorem 5.23} for a proof in 
the ungraded case, which adapts easily to the graded context), the fact that 
$A$ is left $G$-graded noetherian implies that $\bigoplus_{l \in L} I[l]$ is 
injective, and hence the last isomorphism implies $\R^i \Hom_A^H(\phi_!(M), 
\phi_!(I)) = 0$.
\end{proof}

\begin{Remark*}
We point out that the proof does not use the full Bass-Papp Theorem, just the
fact that the direct sum of an arbitrary family of shifted copies of the same 
injective module is again injective, so we may wonder whether this property is 
weaker than $G$-graded noetherianity. In the ungraded case a module is called 
$\Sigma$-injective if the direct sum of arbitrarily many copies of it is
injective. Say that a $G$-graded $A$-module is graded $\Sigma$-injective if an
arbitrary direct sum of shifted copies of itself is injective. Then by a 
reasoning analogous to that of \cite{FW-direcsumreps}*{Theorem, pp. 205-6}
one can prove that an algebra is left $G$-graded noetherian if and only if
every injective object of $\Gr_G A$ is graded $\Sigma$-injective. We thank
MathOverflow user Fred Rohrer for the reference.
\end{Remark*}

\paragraph
\label{T:main-theorem}
We are now ready to prove the main result of this section.
\begin{Theorem*}
Set $d = \projdim_{\k[L]} \k$. Assume $A$ is left $G$-graded noetherian. For 
every object $M$ of $\Gr_G A$ we have $\injdim_A^G M \leq \injdim^H_A 
\phi_!(M) \leq \injdim^G_A M + d$
\end{Theorem*}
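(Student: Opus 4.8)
The plan is to prove the two inequalities separately. The left inequality $\injdim_A^G M \leq \injdim_A^H \phi_!(M)$ is already available to us: it is exactly part (a) of the cited Proposition \ref{hom-dim-inequalities}, so I would simply invoke that and spend no further effort on it. All the real work is in establishing the right inequality $\injdim_A^H \phi_!(M) \leq \injdim_A^G M + d$.

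For the upper bound, let me abbreviate $n = \injdim_A^G M$, and assume $n < \infty$ (otherwise the inequality is vacuous). The strategy is to take an injective resolution $0 \to M \to I^0 \to I^1 \to \cdots \to I^n \to 0$ of $M$ in $\Gr_G A$ and apply the exact functor $\phi_!$, obtaining an exact complex $0 \to \phi_!(M) \to \phi_!(I^0) \to \cdots \to \phi_!(I^n) \to 0$ in $\Gr_H A$. The modules $\phi_!(I^j)$ are no longer injective in $\Gr_H A$, but by Lemma \ref{L:acyclic} they are $\R^i\Hom_A^H(-, \phi_!(J))$-acyclic for any $G$-graded injective $J$ and for any argument lying in the image of $\phi_!$ (more precisely, for any $N$ that is a direct summand of some $\phi_!(M')$). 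The point is that this acyclicity is exactly what lets these non-injective modules still compute $\Ext$ groups, provided the first variable is also of the right form.

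The key computation is to show that $\R^i\Hom_A^H(N, \phi_!(I)) = 0$ for every $H$-graded module $N$ whenever $i > d$ and $I$ is $G$-graded injective. To do this I would resolve $N$ using Proposition \ref{P:resolution}: there is a resolution $0 \to S^d \to \cdots \to S^0 \to N \to 0$ of length at most $d$ by objects of $\mathcal S(N)$, i.e. by direct summands of direct sums of modules of the form $\phi_!(\phi^*(N[h]))$, which in particular are direct summands of modules in the image of $\phi_!$. Now $\Hom_A^H(-, \phi_!(I))$ can be computed from this resolution, and since each $S^k$ is $\R^i\Hom_A^H(-,\phi_!(I))$-acyclic by Lemma \ref{L:acyclic}, the groups $\R^i\Hom_A^H(N, \phi_!(I))$ are the cohomology of the complex $\Hom_A^H(S^\bullet, \phi_!(I))$; as this complex is concentrated in degrees $0$ through $d$, we get $\R^i\Hom_A^H(N, \phi_!(I)) = 0$ for $i > d$. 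This says precisely that $\injdim_A^H \phi_!(I) \leq d$ for any single $G$-graded injective $I$.

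Finally I would assemble the two ingredients via a dimension-shifting argument on the resolution $\phi_!(I^\bullet)$. For any $H$-graded module $N$ and any $i > n + d$, I compute $\R^i\Hom_A^H(N, \phi_!(M))$ by breaking the length-$n$ complex $\phi_!(I^\bullet)$ into short exact sequences and iterating the long exact sequence of $\R^\bullet\Hom_A^H(N, -)$; each shift replaces $\phi_!(M)$ by a syzygy built from the $\phi_!(I^j)$, and after $n$ steps the relevant Ext lands in $\R^{i-n}\Hom_A^H(N, \phi_!(I^n))$ with $i - n > d$, which vanishes by the previous paragraph. Hence $\R^i\Hom_A^H(N, \phi_!(M)) = 0$ for all $N$ and all $i > n + d$, giving $\injdim_A^H \phi_!(M) \leq n + d$. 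The step I expect to be the main obstacle is the careful bookkeeping in this last dimension shift — in particular verifying that the intermediate syzygies retain enough of the "image of $\phi_!$" structure so that Lemma \ref{L:acyclic} keeps applying, and handling the interplay between the acyclicity (which controls the target $\phi_!(I^j)$) and the finite injective dimension of each individual $\phi_!(I^j)$ established above.
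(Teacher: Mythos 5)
Your proposal is correct and takes essentially the same approach as the paper: the heart of both arguments is combining Proposition \ref{P:resolution} with Lemma \ref{L:acyclic} to obtain $\R^i\Hom_A^H(N,\phi_!(I)) = 0$ for all $i > d$ when $I$ is injective in $\Gr_G A$, the only difference being that the paper then inducts on $\injdim_A^G M$ via injective envelopes while you dimension-shift along $\phi_!$ applied to a minimal-length injective resolution. Your closing worry is unfounded: the iterated long exact sequences only require the vanishing of $\R^{i-j}\Hom_A^H(N,\phi_!(I^j))$ in the second variable, not any acyclicity of the intermediate syzygies (which, in any case, are themselves of the form $\phi_!$ of the $G$-graded syzygies, since $\phi_!$ is exact).
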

\begin{proof}
The first inequality holds by Proposition \ref{hom-dim-inequalities}. The case 
where $M$ is of infinite injective dimension is trivially true, so let
us consider the case where $n = \injdim_A^G M$ is finite. In this case we work
by induction.

If $n = 0$ then $M$ is injective in $\Gr_G A$. Let $N$ be an object of 
$\Gr_H A$, and let $P^\bullet \to N$ be a resolution of $N$ of length $d$ by
objects of $\mathcal S(N)$ as in Proposition \ref{P:resolution}. It follows 
from Lemma \ref{L:acyclic} that $\R^i \Hom_A^H(P,\phi_!(I)) = 0$ for every 
object $P$ of $\mathcal S(N)$, so in fact $P^\bullet$ is an acyclic resolution
of $N$ and
\begin{align*}
\R^i\Hom_A^H(N, \phi_!(M)) 
 &\cong H^i(\Hom_A^H(P^\bullet, \phi_!(M)))
\end{align*}
for each $i \geq 0$. Thus $\R^i\Hom_A^H(N, \phi_!(M)) = 0$ for all $i > d$, 
and since $N$ was arbitrary this implies that $\injdim_A^H \phi_!(M) \leq d$.

Now assume that the result holds for all objects of $\Gr_G A$ with
injective dimension less than $n$. Let $M \to I$ be an injective envelope
of $M$ in $\Gr_G A$, and let $M'$ be its cokernel. Then $\injdim^G_A M' = 
n-1$, and so by the inductive hypothesis $\injdim^H_A \phi_!(M') \leq n-1+d$. 
Now we have an exact sequence in $\Gr_H A$ of the form
\begin{align*}
0 \to \phi_!(M) \to \phi_!(I) \to \phi_!(M') \to 0.
\end{align*}
By standard homological algebra the injective dimension of $\phi_!(M)$ is 
bounded above by the maximum between $\injdim_A^H \phi_!(I) + 1 \leq d + 1$
and $\injdim_A^H \phi_!(M') + 1 \leq n + d$. This gives us the desired 
inequality.
\end{proof}

\section{Change of grading at the derived level and dualizing complexes}
\label{COG-DC}
Dualizing complexes for noncommutative rings were introduced by A. Yekutieli 
in the context of connected $\NN$-graded algebras in order to study their 
local cohomology; they have proven to be very useful in the study of ring 
theoretical properties of non commutative rings, see for example 
\cites{Yek-dc, Jor-lc, VdB-existence-dc, YZ-aus-dc, WZ-survey-dc, 
YZ-rigid-dc}, etc. A dualizing complex is essentially an object $R^\bullet$ in 
the derived category of $\Mod A^e$ such that the functor $\R\Hom_A(-, 
R^\bullet)$ is a duality between $\D^b(\Mod A)$ and $\D^b(\Mod A^\op)$, for a 
precise definition see Definition \ref{dc-definition}. A graded dualizing 
complex in principle only guarantees dualities at the graded level, but 
according to Van den Bergh, a $\ZZ$-graded dualizing complex is also an 
ungraded dualizing complex \cite{VdB-existence-dc}. In this section we show
that in fact a $\ZZ^r$-graded dualizing complex remains a dualizing complex
after regrading. Once you have Theorem \ref{T:main-theorem}, the proof
in the $\ZZ^r$-graded case is no more difficult than in the $\ZZ$-graded case,
except for the technical complications due to the extra gradings. Still, we 
felt it was worthwhile to develop these technicalities in order to obtain a 
precise statement of Theorem \ref{dc-regrading}.

Throughout this section $\k$ is a field, $G$ is an abelian group, and $A$ is a 
$G$-graded $\k$-algebra. We denote by $A^e$ the enveloping algebra $A \ot 
A^\op$; since $G$ is abelian both $A^\op$ and $A^e$ are $G$-graded algebras. 

\paragraph
Let us fix some notation regarding derived categories. Given an abelian 
category $\A$, we denote by $\K(A)$ the category of complexes of objects of 
$\A$ with homotopy classes of maps of complexes as morphisms, and by $\D(\A)$ 
the derived category of $\A$. As usual we denote by $\D^+(\A), \D^-(\A), 
\D^b(\A)$ the full subcategories of $\D(A)$ consisting of left bounded, right 
bounded and bounded complexes. Recall that an injective resolution of a 
left bounded complex $R^\bullet$ is a quasi-isomorphism $R^\bullet \to 
I^\bullet$ where $I^\bullet$ is a left bounded complex formed by injective 
objects of $\A$. If $\A$ has enough injectives then every left bounded complex 
has an injective resolution. Analogous remarks apply for projective 
resolutions of right bounded complexes.

If $F: \A \to \mathcal B$ is an exact functor between abelian categories, then
by the universal property of derived categories there is an induced functor
$\D(\A) \to \D(\mathcal B)$, which by abuse of notation we will also denote by
$F$.

\paragraph
The maps $a \in A \mapsto a \ot 1 \in A^e$ and $a \in A^\op \mapsto 1 \ot a
\in A^e$ induce restriction functors $\Res_A: \Gr_{G} A^e \to \Gr_{G} A$ and 
$\Res_{A^\op}: \Gr_{G} A^e \to \Gr_{G} A^\op$. These functors are exact and 
preserve projectives and injectives, which can be proved following the lines 
of the proof in the case $G = \ZZ$ found in \cite{Yek-dc}*{Lemma 2.1}. If
$H$ is any group and $\phi: G \to H$ is a group morphism then it is
clear that the associated change of grading functors commute with the 
restriction functors in the obvious sense. Since restriction and change of 
grading functors are exact, they induce exact functors between the 
corresponding derived categories.

\paragraph
There exists a functor
\begin{align*}
\HOM_A^{G}: \K(\Gr_{G} A^e)^\op \times \K(\Gr_{G} A^e) 
 \to \K(\Gr_{G} A^e)
\end{align*}
defined as follows. Given complexes $M^\bullet, N^\bullet$, for each $n \in 
\ZZ$ we set
\begin{align*}
 \HOM_A^{G}(N^\bullet,M^\bullet)^n 
  &= \prod_{p \in \ZZ} \HOM_A^{G}(N^p, M^{p+n}),
\end{align*}
where the product is taken in the category of $G$-graded $A^e$-modules;
this sequence of $G$-graded $A^e$-modules is made into a complex with 
differential
\begin{align*}
 d^n &= \prod_{p \in \ZZ} ((-1)^{n+1}\HOM_A^{G}(d_N^{p},M^{p+n}) + 
 \HOM_A^{G}(N^{p},d_M^{p+n})).
\end{align*}
The action of $\HOM_A^G$ on maps is defined in the usual way.

The functor $\HOM_A^{G}$ has a right derived functor
\begin{align*}
  \R\HOM_A^{G}: 
  \D(\Gr_{G} A^e)^\op \times \D(\Gr_{G} A^e) 
  \to \D(\Gr_{G} A^e).
\end{align*}
When $M^\bullet$ is an object of $\D^+(\Gr_{G} A^e)$ such that 
$M^i$ is injective as left $A$-module for each $i \in \ZZ$, then 
\[
 \R\HOM_A^{G}(N^\bullet, M^\bullet) 
  \cong \HOM_A^{G}(N^\bullet,M^\bullet)
\] 
for every object $N^\bullet$ of $\D(\Gr_{G} A^e)$. Analogously, if
$N^\bullet$ is an object of $\D^-(\Gr_{G} A^e)$ such that $N^i$ is 
projective as left $A$-module for each $i \in \ZZ$, then 
\[
\R\HOM_A^{G}(N^\bullet, M^\bullet) 
 \cong \HOM_A^{G}(N^\bullet, M^\bullet)
\]
for every object $M^\bullet$ of $\D(\Gr_{G} A^e)$. This is proved in the case
$G = \ZZ$ in \cite{Yek-dc}*{Theorem 2.2}, and the general proof follows the 
same reasoning. There is a completely analogous functor $\HOM^{G}_{A^\op}$ 
whose derived functor $\R\HOM_{A^\op}^{G}$ has similar properties.

\paragraph
\label{natural-map}
Let $R^\bullet$ be a complex of $A^e$-modules. Seeing $A^\op$ as a
complex of $A^e$-modules concentrated in homological degree $0$, there is a
map $A^\op \to \HOM_A^{G}(R^\bullet, R^\bullet)$ given by sending $a 
\in A^\op$ to right multiplication by $a$ acting on $R^\bullet$. 
Now let $P^\bullet \to R^\bullet$ be a projective resolution of $R^\bullet$, 
so there is an isomorphism 
\[
\R\HOM_A^{G}(R^\bullet, R^\bullet) 
 \cong \HOM_A^{G}(P^\bullet, P^\bullet),
\]
and we get a map $\nat_A: A^\op \to \R\HOM_A^{\ZZ^r}(R^\bullet, R^\bullet)$. 
This map is independent of the projective resolution we choose, so we refer to 
it as the \emph{natural map} from $A^\op$ to $\R\HOM_A^{\ZZ^r}(R^\bullet, 
R^\bullet)$. In the same way there is a natural map from $A$ to 
$\R\HOM_{A^\op}^{\ZZ^r}(R^\bullet, R^\bullet)$. The proof that these maps are 
independent of the chosen resolution is quite tedious but elementary; the 
reader is referred to \cite{Zad-thesis}*{Appendix A} for details.

\paragraph
\label{dc-definition}
Assume that $G = \ZZ^r$ for some $r \geq 0$. We say that $A$ is 
$\NN^r$-graded if $\supp A \subset \NN^r$, and that it is connected if $A_0 = 
\k$. If $A$ is $\NN^r$-graded then so are $A^\op$ and $A^e$, and they are 
connected if and only if $A$ is connected. 

The following definition is adapted from \cite{Yek-dc}*{Definition 3.3}.
\begin{Definition*}
Let $A$ be a connected $\NN^r$-graded noetherian algebra. A 
\emph{$\ZZ^r$-graded dualizing complex} over $A$ is a bounded complex 
$R^\bullet$ of $A^e$-modules with the following properties.

\begin{enumerate}
\item 
\label{fg-dc}
The cohomology modules of $\Res_A(R^\bullet)$ and $\Res_{A^\op}(R^\bullet)$ 
are finitely generated.

\item 
\label{inj-dc}
Both $\Res_A(R^\bullet)$ and $\Res_{A^\op}(R^\bullet)$ have finite
injective dimension.

\item 
\label{nat-dc}
The maps $\nat_A: A^\op \to \R\HOM_{A}^{\ZZ^r} (R^\bullet, 
R^\bullet)$ and $\nat_{A^\op}: A \to \mathcal R\HOM_{A^\op}^{\ZZ^r}(R^\bullet, 
R^\bullet)$ are isomorphisms in $\D(\Gr_{\ZZ^r} A^e)$.
\end{enumerate}
\end{Definition*}
A dualizing complex in the ungraded sense is an object of $\D(\Mod A^e)$ which
complies with the ungraded analogue of the previous definition. Our objective 
is to show that a $\ZZ^r$-graded dualizing complex remains a dualizing complex 
if we change (or forget) the grading. Since being finitely generated is 
independent of grading, item \ref{fg-dc} of the definition remains true if we 
change or forget the grading. To see how item \ref{inj-dc} behaves with 
respect to change of grading requires a derived version of Theorem 
\ref{T:main-theorem}, while item \ref{nat-dc} is also invariant by change
of grading by a simple argument. We provide the details in the following 
lemmas, in a slightly more general context.

\paragraph
\label{derived-inj-dim}
Recall that given a group morphism $\phi: G \to H$, a $G$-graded $\k$-vector 
space $M$ is said to be $\phi$-finite if $\supp M \cap \phi^{-1}(h)$ is a 
finite set for each $h \in H$. 

\begin{Lemma*}
Let $\phi: G \to H$ be a group morphism and set $L = \ker \phi$. Let 
$R^\bullet$ be a bounded complex of $G$-graded $A$-modules. 
\begin{enumerate}
\item 
\label{phi-finite-derived-injdim}
If the cohomology modules of $R^\bullet$ are $\phi$-finite then
$\injdim_A^{G} R^\bullet = \injdim_A^H \phi_!(R^\bullet)$

\item 
\label{noetherian-derived-injdim}
Let $d = \projdim_{\k[L]} \k$. If $A$ is left $G$-graded noetherian then 
the following inequalities hold
\[
 \injdim_A^G R^\bullet 
  \leq \injdim_A^H \phi_!(R^\bullet) 
  \leq \injdim_A^G R^\bullet + d.
\]
\end{enumerate}
\end{Lemma*}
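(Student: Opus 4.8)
The plan is to treat the injective dimension of a complex homologically, setting $\injdim_A^G R^\bullet = \sup\{i : \R^i\Hom_A^G(X, R^\bullet) \neq 0 \text{ for some } X \in \Gr_G A\}$, and to recall the standard fact that a bounded complex of finite injective dimension $n$ is isomorphic in $\D^b(\Gr_G A)$ to a bounded complex $J^\bullet$ of $G$-graded injectives with $J^j = 0$ for $j > n$. The whole argument transports information through the change of grading functors at the derived level. The key structural inputs are that $\phi_!$ is exact, so it preserves quasi-isomorphisms and satisfies $H^i(\phi_!(R^\bullet)) = \phi_!(H^i(R^\bullet))$, and that both $\phi^*$ and $\phi_*$ preserve injectives, being right adjoints of the exact functors $\phi_!$ and $\phi^*$ respectively. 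From the latter one obtains derived adjunctions such as $\R\Hom_A^H(\phi_!(X), Y^\bullet) \cong \R\Hom_A^G(X, \phi^*(Y^\bullet))$.

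First I would establish the lower bound $\injdim_A^G R^\bullet \le \injdim_A^H \phi_!(R^\bullet)$, which needs no extra hypotheses and covers the inequality $\le$ in both items. For any $X \in \Gr_G A$, the derived adjunction together with the natural isomorphism $\phi^*(\phi_!(R^\bullet)) \cong \bigoplus_{l \in L} R^\bullet[l]$ (which holds termwise, hence for the complex) gives $\R\Hom_A^H(\phi_!(X), \phi_!(R^\bullet)) \cong \R\Hom_A^G(X, \bigoplus_{l\in L} R^\bullet[l])$. Since $R^\bullet = R^\bullet[e]$, with $e$ the identity of $L$, is a direct summand of $\bigoplus_{l\in L} R^\bullet[l]$, the object $\R\Hom_A^G(X, R^\bullet)$ is a direct summand of the right-hand side; hence $\R^i\Hom_A^G(X, R^\bullet) \neq 0$ forces $\R^i\Hom_A^H(\phi_!(X), \phi_!(R^\bullet)) \neq 0$, and taking suprema yields the inequality.

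For the remaining inequality in item \ref{noetherian-derived-injdim} I would pick an injective representative $R^\bullet \cong J^\bullet$ with $J^\bullet$ a bounded complex of $G$-graded injectives concentrated in degrees $\le n := \injdim_A^G R^\bullet$, and apply the exact functor $\phi_!$ to obtain $\phi_!(R^\bullet) \cong \phi_!(J^\bullet)$, a bounded complex concentrated in degrees $\le n$. By Theorem \ref{T:main-theorem} applied to each injective $J^j$ we have $\injdim_A^H \phi_!(J^j) \le d$. A dévissage along the stupid truncation of $\phi_!(J^\bullet)$ produces distinguished triangles whose outer terms are the modules $\phi_!(J^j)$ placed in cohomological degree $j$; since placing a module in degree $j$ raises its injective dimension by $j$, the triangle inequality for injective dimension gives $\injdim_A^H \phi_!(R^\bullet) \le \max_{j \le n}(\injdim_A^H \phi_!(J^j) + j) \le n + d$.

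Finally, for the remaining inequality in item \ref{phi-finite-derived-injdim} I would exploit the natural transformation $\eta : \phi_! \Rightarrow \phi_*$. Applied to $R^\bullet$ it is a chain map $\phi_!(R^\bullet) \to \phi_*(R^\bullet)$ whose effect on $H^i$ is, under the identifications $H^i(\phi_!(R^\bullet)) = \phi_!(H^i(R^\bullet))$ and $H^i(\phi_*(R^\bullet)) = \phi_*(H^i(R^\bullet))$, the map $\eta(H^i(R^\bullet))$; this is an isomorphism precisely when $H^i(R^\bullet)$ is $\phi$-finite. Under the hypothesis of item \ref{phi-finite-derived-injdim} every cohomology module is $\phi$-finite, so $\eta(R^\bullet)$ is a quasi-isomorphism and $\phi_!(R^\bullet) \cong \phi_*(R^\bullet)$ in $\D(\Gr_H A)$. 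Taking the injective representative $J^\bullet$ as above and using that $\phi_*$ preserves injectives, $\phi_*(R^\bullet) \cong \phi_*(J^\bullet)$ is a bounded complex of $H$-graded injectives concentrated in degrees $\le n$, whence $\injdim_A^H \phi_!(R^\bullet) = \injdim_A^H \phi_*(R^\bullet) \le n$. I expect the only genuinely delicate point to be the complex-level bookkeeping in item \ref{noetherian-derived-injdim}: ensuring that the injective representative with the correct degree bound exists and that the dévissage assembles the termwise bounds of Theorem \ref{T:main-theorem} into the stated bound $n + d$ for the total complex. The rest is a formal transport through adjunctions that preserve injectives.
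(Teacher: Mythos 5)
Your argument is correct, and its skeleton matches the paper's: both proofs rest on the natural transformation $\eta\colon \phi_! \Rightarrow \phi_*$ becoming a quasi-isomorphism when the cohomology modules are $\phi$-finite, together with the fact that $\phi_*$ preserves injectives, for item \ref{phi-finite-derived-injdim}; and on a dévissage of an injective representative concentrated in degrees $\leq n$ with Theorem \ref{T:main-theorem} as base case for item \ref{noetherian-derived-injdim} --- the paper's induction on the exact sequences $0 \to I^{>t} \to I^\bullet \to I^t \to 0$ is precisely your stupid-truncation argument. The genuine differences are two local substitutions. First, for the inequality $\injdim_A^G R^\bullet \leq \injdim_A^H \phi_!(R^\bullet)$ the paper takes a \emph{minimal} injective resolution $I^\bullet$ and argues that a strictly shorter $H$-graded resolution would, after truncating $\phi_*(I^\bullet)$ and using that $\phi_*$ preserves injective dimension exactly (Proposition \ref{hom-dim-inequalities}), contradict minimality; you instead derive the unconditional lower bound from the adjunction isomorphism $\R\Hom_A^H(\phi_!(X),\phi_!(R^\bullet)) \cong \R\Hom_A^G\bigl(X, \bigoplus_{l \in L} R^\bullet[l]\bigr)$ and the summand $R^\bullet$ of $\bigoplus_{l \in L} R^\bullet[l]$. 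This is cleaner, needs no minimality, and uniformly absorbs the infinite-injective-dimension case, which the paper treats separately via the same isomorphism $\phi^*(\phi_!(I^\bullet)) \cong \bigoplus_{l \in L} I^\bullet[l]$. Second, where the paper applies $\eta$ to the injective resolution $I^\bullet$ and invokes \cite{Hart-RD}*{Proposition 7.1} (whence its remark that $\phi$-finite modules are closed under extensions), you apply $\eta$ to $R^\bullet$ itself and check $H^i(\eta(R^\bullet)) = \eta(H^i(R^\bullet))$ directly; this is legitimate precisely because $\phi_!$ and $\phi_*$ are honestly exact, so the way-out machinery is avoidable. The only point you should make explicit in a final write-up is the standard equivalence between your $\sup$-characterization of $\injdim$ for bounded complexes and the existence of an injective representative vanishing above $n$ (the usual truncation argument); the paper relies on the same fact implicitly through its minimal resolutions.
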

\begin{proof}
Let $R^\bullet \to I^\bullet$ be an injective resolution of minimal length.
It is enough to prove the statement with $I^\bullet$ instead of $R^\bullet$.

Suppose $I^\bullet$ has $\phi$-finite cohomology modules. Recall that there is 
a natural transformation $\eta: \phi_! \Rightarrow \phi_*$, and that $\eta(M)$ 
is an isomorphism if an only if $M$ is $\phi$-finite. The class of 
$\phi$-finite $G$-graded $A$-modules is closed by extensions, so applying 
\cite{Hart-RD}*{Proposition 7.1} (in the reference ``thick'' stands for 
``closed by extensions'') we get that the map $\phi_!(I^\bullet) \to 
\phi_*(I^\bullet)$ is a quasi-isomorphism, and since $\phi_*$ preserves 
injectives it is an injective resolution, so $\injdim_A^{G} R^\bullet \geq 
\injdim_A^H \phi_!(R^\bullet)$. If the inequality were strict, then we could 
truncate $\phi_*(I^\bullet)$ to obtain a shorter complex of the form
\[
\cdots 
 \to \phi_*(I^{j-1}) 
 \to \phi_*(I^{j}) 
 \to \phi_*(\coker d^j) 
 \to 0 
 \to \cdots
\]
with $\phi_*(\coker d^j)$ an injective $H$-graded $A$-module. Since $\phi_*$
preserves injective dimension by Proposition \ref{hom-dim-inequalities}, this
would contradict the fact that $I^\bullet$ is a minimal resolution of 
$R^\bullet$, so in fact $\injdim_A^{G} R^\bullet = \injdim_A^H 
\phi_!(R^\bullet)$. This proves item \ref{phi-finite-derived-injdim}

For item \ref{noetherian-derived-injdim}, assume first that $I^\bullet$ is 
bounded. We proceed by induction on $s$, the length of $I^\bullet$. The case 
$s = 0$ is a special case of Theorem \ref{T:main-theorem}. Now let $t \in \ZZ$ 
be the minimal homological degree such that $I^t \neq 0$, and consider the 
exact sequence of complexes
\begin{align*}
0 \to I^{> t} \to I^\bullet \to I^t \to 0,
\end{align*}
where $I^t$ is seen as a complex concentrated in homological degree $t$ and
$I^{> t}$ is the subcomplex of $I^\bullet$ formed by all components in 
homological degree larger than $t$. Thus there is a distinguished triangle
$\phi_!(I^{> t}) \to \phi_!(I^\bullet) \to \phi_!(I^t) \to$ in $\D(\Gr_H A)$.
By the inductive hypothesis the inequality holds for the first and third 
complexes of the triangle, so a simple argument with long exact sequences 
shows that the corresponding inequality holds for $\phi_!(I^\bullet)$.

Finally, if $I^\bullet$ is not bounded then we only have to prove that 
$\phi_!(I^\bullet)$ does not have finite injective dimension. Now $\phi^*$
preserves injective dimensions, and since $\phi^*(\phi_!(I^\bullet)) \cong 
\bigoplus_{l \in L} I[l]^\bullet$ has infinite injective dimension, so does
$\phi_!(I^\bullet)$.
\end{proof}

\begin{Lemma}
\label{natural-map-lemma}
Let $G,H$ be abelian groups and $\phi: G \to H$ a group morphism. Assume $A$ 
is $G$-graded noetherian. Let $S^\bullet, R^\bullet$ be bounded 
complexes of $G$-graded $A^e$-modules such that the cohomology modules of 
$R^\bullet$ are finitely generated as left $A$-modules. 
\begin{enumerate}
\item 
\label{RHOM-cog-iso}
The map
\[
 \phi_!(\R\HOM_A^G(R^\bullet, S^\bullet)) 
  \to \R\HOM_A^H(\phi_!(R^\bullet), \phi_!(S^\bullet))
\]
is an isomorphism.

\item 
\label{nat-cog}
The composition 
\begin{align*}
\xymatrix{
\phi_!(A) 
  \ar[r]^-{\phi_!(\nat_A)}
  &\phi_!(\R\HOM_A^G(R^\bullet, R^\bullet)) \ar[r]
  &\R\HOM_A^H(\phi_!(R^\bullet), \phi_!(R^\bullet))
}
\end{align*}
equals $\nat_{\phi_!(A)}: \phi_!(A) \to \R\Hom_A^H(\phi_!(R^\bullet), 
\phi_!(R^\bullet))$
\end{enumerate}
\end{Lemma}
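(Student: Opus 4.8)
The plan is to deduce both items from a single \emph{underived} comparison isomorphism that holds whenever the first argument is finitely generated, and then to transport it through a finitely generated projective resolution. For any two $G$-graded $A^e$-modules $M,N$ there is a natural $A^e$-linear comparison map
\[
\theta_{M,N}\colon \phi_!(\HOM_A^G(M,N)) \to \HOM_A^H(\phi_!(M),\phi_!(N)),
\]
built from the canonical identifications $\phi_!(N[g]) \cong \phi_!(N)[\phi(g)]$: in homogeneous degree $h$ it carries a finitely supported family $(f_g)_{g \in \phi^{-1}(h)}$ in $\bigoplus_{g \in \phi^{-1}(h)}\Hom_A^G(M,N[g])$ to $\sum_g \phi_!(f_g)$. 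Applying $\theta$ componentwise to complexes yields a chain map whose derived version is the morphism of item \ref{RHOM-cog-iso}; its $A^e$-equivariance is clear from the construction, so it is a genuine morphism in $\D(\Gr_H A^e)$.

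The crux is that $\theta_{M,N}$ is an isomorphism as soon as $M$ is finitely generated. Injectivity holds in general: if $\sum_g \phi_!(f_g) = 0$, then evaluating on a homogeneous $m \in M_{g'}$ the summands $f_g(m) \in N_{g'g}$ sit in pairwise distinct homogeneous components, so every $f_g$ vanishes. For surjectivity, from $u\colon \phi_!(M) \to \phi_!(N)[h]$ one recovers $u_g\colon M \to N[g]$ by taking, on each $M_{g'}$, the component of $u$ landing in $N_{g'g}$; each $u_g$ is $G$-homogeneous and $A$-linear and $u = \sum_g \phi_!(u_g)$. If $M$ is generated by homogeneous elements of degrees $g_1,\dots,g_k$, then $u_g \neq 0$ forces $g \in \bigcup_i g_i^{-1}\supp(u(m_i))$, a finite set, so the sum is finite and lands in $\bigoplus_g$.

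For item \ref{RHOM-cog-iso} proper, I would use that $A$ is $G$-graded noetherian and $R^\bullet$ has finitely generated cohomology over $A$ to choose a quasi-isomorphism $P^\bullet \to R^\bullet$ with $P^\bullet$ a bounded-above complex of finitely generated graded projective left $A$-modules. Since $\phi_!$ is exact and preserves projectives, finite generation and direct sums, $\phi_!(P^\bullet) \to \phi_!(R^\bullet)$ is a resolution of the same kind, so both derived $\HOM$'s are computed by the corresponding $\HOM$ complexes and the map in question is represented by $\theta_{P^\bullet,S^\bullet}$. Boundedness of $S^\bullet$ together with bounded-aboveness of $P^\bullet$ makes each product $\prod_p \HOM_A^G(P^p,S^{p+n})$ finite, so $\phi_!$ commutes with it and $\theta_{P^\bullet,S^\bullet}$ is an isomorphism of complexes by the base case applied degreewise. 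The one subtlety is that being an isomorphism in $\D(\Gr_H A^e)$ is equivalent to the underlying map being a quasi-isomorphism, which is precisely what this $A$-linear finitely generated resolution verifies, even though the $A^e$-equivariant resolutions one uses to \emph{define} the derived map need not be finitely generated over $A$.

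Item \ref{nat-cog} is then bookkeeping. Computing with one projective resolution $P^\bullet \to R^\bullet$ over $A^e$ that is projective over $A$ (so that $\phi_!(P^\bullet)$ serves on both sides), the map $\nat_A$ is represented by the chain endomorphism of $P^\bullet$ given by right multiplication, and $\phi_!(\nat_A)$ by the same after $\phi_!$. Under the isomorphism of item \ref{RHOM-cog-iso}, realized by $\theta$, right multiplication by $a$ on $\phi_!(\HOM_A^G(P^\bullet,P^\bullet))$ is sent to right multiplication by $a$ on $\HOM_A^H(\phi_!(P^\bullet),\phi_!(P^\bullet))$ because $\theta$ intertwines the right $A$- and $\phi_!(A)$-actions; the latter represents $\nat_{\phi_!(A)}$ by the definition in paragraph \ref{natural-map}, so the composition equals $\nat_{\phi_!(A)}$. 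I expect the main obstacle to lie entirely in item \ref{RHOM-cog-iso}, namely in reconciling the finite-generation needed for $\theta$ to be an isomorphism with the non-finitely-generated $A^e$-resolutions, which the boundedness hypotheses and the reduction to an underlying quasi-isomorphism are designed to overcome; item \ref{nat-cog} then carries only the risk of an identification error when transporting right multiplication through $\theta$.
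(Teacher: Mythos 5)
Your proof is correct, and it reaches the conclusion by a genuinely more self-contained route than the paper's. Both arguments share the same skeleton: the comparison map is realized, after choosing a projective resolution $P^\bullet \to R^\bullet$, as the canonical inclusion of $\phi_!(\HOM_A^G(P^\bullet,S^\bullet))$ into $\HOM_A^H(\phi_!(P^\bullet),\phi_!(S^\bullet))$, and the engine is the fact that your $\theta_{M,N}$ is an isomorphism when $M$ is finitely generated over the graded noetherian algebra $A$. Where the paper simply cites this module-level fact (\cite{RZ-twisted}*{Proposition 1.3.7}) and then passes from modules to bounded complexes by the standard d\'evissage of \cite{Hart-RD}*{Proposition I.7.1(i)}, you prove the module case from scratch (your injectivity and surjectivity analysis via homogeneous components, including the finiteness of $\bigcup_i g_i^{-1}\supp u(m_i)$, is exactly right) and replace the d\'evissage by a direct computation: a bounded-above resolution of $R^\bullet$ by finitely generated graded projective left $A$-modules, together with the observation that boundedness of $S^\bullet$ makes each product $\prod_p \HOM_A^G(P^p,S^{p+n})$ finite, so that $\phi_!$ passes through it and $\theta$ is an isomorphism of complexes degreewise. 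This buys an explicit, reference-free argument at the cost of some bookkeeping that the paper delegates to the way-out machinery. One step you flag but should spell out: the derived map is defined via an $A^e$-projective resolution $Q^\bullet$ (projective as left $A$-modules), so identifying it with $\theta_{P^\bullet,S^\bullet}$ requires noting that $Q^\bullet$ and $P^\bullet$, being bounded-above complexes of projective graded left $A$-modules quasi-isomorphic to $\Res_A(R^\bullet)$, are homotopy equivalent over $A$, and that $\theta$ is natural, so the two representatives agree in the derived category of graded $\k$-modules; this suffices because a morphism in $\D(\Gr_H A^e)$ is an isomorphism exactly when the underlying map of complexes is a quasi-isomorphism. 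Your treatment of item \ref{nat-cog} --- representing both $\nat_A$ and $\nat_{\phi_!(A)}$ by right multiplication on a single resolution that serves on both sides, since $\phi_!$ preserves projectivity --- is precisely the content behind the paper's ``clearly item \ref{nat-cog} follows''.
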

\begin{proof}
The map from item \ref{RHOM-cog-iso} is obtained as follows. Let $P^\bullet
\to R^\bullet$ be a projective resolution. Then $\phi_!(P^\bullet) \to 
\phi_!(R^\bullet)$ is also a projective resolution since $\phi_!$ is exact and 
preserves projectives. Now by definition of $\HOM^G_A(R^\bullet, S^\bullet)$, 
we have $\phi_!(\HOM_A^{G}(P^\bullet, S^\bullet)) \subset \HOM_A^{H}
(\phi_!(P^\bullet), \phi_!(S^\bullet))$, and the desired map is the inclusion. 
Once again this map is independent of the chosen projective resolution. 
Clearly item \ref{nat-cog} follows from this.

If $R^\bullet$ and $S^\bullet$ are concentrated in homological degree $0$,
item \ref{RHOM-cog-iso} is a well-known result, see for example 
\cite{RZ-twisted}*{Proposition 1.3.7}. The general result follows by standard 
arguments using \cite{Hart-RD}*{Proposition I.7.1(i)}.
\end{proof}

\paragraph
\label{dc-regrading}
We are now ready to prove the main result of this section. 
\begin{Theorem*}
Let $A$ be a connected $\NN^r$-graded noetherian $\k$-algebra and let
$R^\bullet$ be a $\ZZ^r$-graded dualizing complex over $A$.
\begin{enumerate}
\item 
\label{phi-connected-dc}
Let $s > 0$ and let $\phi: \ZZ^r \to \ZZ^s$ be a group
morphism such that $\phi_!(A)$ is $\NN^s$-graded connected. Then
$\phi_!(R^\bullet)$ is a $\ZZ^s$-graded dualizing complex over 
$\phi_!(A)$ of injective dimension $\injdim^{\ZZ^r}_A R^\bullet$.

\item 
\label{ungrading-dc}
Let $\O: \D(\Gr_{\ZZ^r} A^e) \to \D(\Mod A^e)$ be the
forgetful functor. Then $\O(R^\bullet)$ is a dualizing complex over $A$
in the ungraded sense, of injective dimension at most $\injdim^{\ZZ^r}_A
R^\bullet + 1$.
\end{enumerate}
\end{Theorem*}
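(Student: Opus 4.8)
The plan is to check, in each part, the three defining conditions of Definition~\ref{dc-definition}, transporting them along $\phi_!$ (resp. the forgetful functor) by means of Lemmas~\ref{derived-inj-dim} and~\ref{natural-map-lemma}. Two of the conditions transfer essentially for free. Item~\ref{fg-dc} is insensitive to the grading: $\phi_!$ and $\O$ leave the underlying bimodule and its cohomology unchanged, so finite generation is preserved. For item~\ref{nat-dc}, the maps $\nat_A$ and $\nat_{A^\op}$ are isomorphisms because $R^\bullet$ is dualizing; applying the exact functor $\phi_!$ keeps $\phi_!(\nat_A)$ and $\phi_!(\nat_{A^\op})$ isomorphisms, and item~\ref{nat-cog} of Lemma~\ref{natural-map-lemma} identifies their composites with the isomorphisms of item~\ref{RHOM-cog-iso} as the natural maps $\nat_{\phi_!(A)}$ and $\nat_{\phi_!(A)^\op}$. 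Hence these are isomorphisms too. The whole difficulty is therefore concentrated in the injective-dimension condition, item~\ref{inj-dc}, and this is where the arithmetic of $\phi$ enters.

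The crux, which I would isolate as a preliminary step, is that under the hypotheses of part~\ref{phi-connected-dc} the algebra $A$ is $\phi$-finite. A connected $\NN^r$-graded noetherian algebra is finitely generated as a $\k$-algebra: lifting homogeneous generators of the augmentation ideal (finitely many, by noetherianity) yields algebra generators whose degrees $\xi_1,\dots,\xi_k$ lie in $\NN^r\setminus\{0\}$. Thus $\supp A$ is contained in the submonoid of $\NN^r$ generated by the $\xi_j$. Now connectedness of $\phi_!(A)$ means precisely that $\phi(\supp A)\subseteq\NN^s$ and $\supp A\cap\ker\phi=\{0\}$, so each $\phi(\xi_j)$ lies in $\NN^s\setminus\{0\}$ and hence has strictly positive total degree. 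Writing any element of $\supp A\cap\phi^{-1}(h)$ as $\sum_j n_j\xi_j$ with $n_j\in\NN$, comparing total degrees gives $\sum_j n_j\,|\phi(\xi_j)|=|h|$, which bounds $\sum_j n_j$ and leaves only finitely many tuples, hence finitely many such elements. So $A$, and likewise $A^\op$ (which has the same support), is $\phi$-finite, and by the remark following the theorem in~\ref{phi-finite} every finitely generated graded $A$- or $A^\op$-module is $\phi$-finite as well.

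With this, part~\ref{phi-connected-dc} follows quickly. The change of grading functors commute with the restrictions, so $\Res_A\phi_!(R^\bullet)\cong\phi_!(\Res_A R^\bullet)$ has finitely generated, hence $\phi$-finite, cohomology; item~\ref{phi-finite-derived-injdim} of Lemma~\ref{derived-inj-dim} then gives $\injdim_A^{\ZZ^s}\phi_!(R^\bullet)=\injdim_A^{\ZZ^r}R^\bullet$, and similarly on the $A^\op$-side, so item~\ref{inj-dc} holds with the injective dimension unchanged. Since $\phi_!(A)$ has the same underlying ring as $A$ it is noetherian, and it is connected $\NN^s$-graded by hypothesis; combining this with the discussion of items~\ref{fg-dc} and~\ref{nat-dc} above, we conclude that $\phi_!(R^\bullet)$ is a $\ZZ^s$-graded dualizing complex over $\phi_!(A)$ of injective dimension $\injdim_A^{\ZZ^r}R^\bullet$.

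For part~\ref{ungrading-dc} the idea, following Van den Bergh, is to forget the grading in two stages so that the cost is paid only once. First I would apply part~\ref{phi-connected-dc} to the sum morphism $\psi:\ZZ^r\to\ZZ$, $\psi(z_1,\dots,z_r)=z_1+\cdots+z_r$: since $A$ is connected and $\NN^r$-graded, $\psi_!(A)$ is connected and $\NN$-graded, so $\psi_!(R^\bullet)$ is a $\ZZ$-graded dualizing complex over $\psi_!(A)$ of the same injective dimension $n=\injdim_A^{\ZZ^r}R^\bullet$. It remains to forget the single remaining $\ZZ$-grading, which is $\phi_!$ for the morphism $\ZZ\to\{e\}$ onto the trivial group; here $L=\ker=\ZZ$ and $d=\projdim_{\k[\ZZ]}\k=1$, and $\O=\O'\circ\psi_!$ where $\O'$ forgets the $\ZZ$-grading. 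Item~\ref{fg-dc} again transfers freely, item~\ref{nat-dc} follows from Lemma~\ref{natural-map-lemma} applied to $\ZZ\to\{e\}$ exactly as before, and for item~\ref{inj-dc} item~\ref{noetherian-derived-injdim} of Lemma~\ref{derived-inj-dim} gives $\injdim_A\O(R^\bullet)\leq n+d=n+1$, finite, on both the $A$- and $A^\op$-sides. Thus $\O(R^\bullet)$ is an ungraded dualizing complex over $A$ of injective dimension at most $\injdim_A^{\ZZ^r}R^\bullet+1$. The main obstacle is the $\phi$-finiteness step of the second paragraph; once it is in place, the rest is assembling the two lemmas.
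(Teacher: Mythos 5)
Your proof is correct and follows the same overall strategy as the paper's: you transfer conditions \ref{fg-dc} and \ref{nat-dc} of Definition \ref{dc-definition} via exactness of $\phi_!$ and Lemma \ref{natural-map-lemma}, reduce condition \ref{inj-dc} to the $\phi$-finiteness of the cohomology plus Lemma \ref{derived-inj-dim}, and obtain the ungraded statement by factoring the forgetful functor as $\O = \O' \circ \psi_!$ with $\psi$ the total-degree map and then forgetting the remaining $\ZZ$-grading via $\ZZ \to 0$, with $d = \projdim_{\k[\ZZ]} \k = 1$ --- exactly the paper's two-stage argument. The one place where you genuinely diverge is the proof that $A$ is $\phi$-finite. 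The paper deduces it abstractly: $A$ is $\ZZ^r$-graded noetherian, hence (via the Chin--Quinn result cited in \ref{G-graded-algebras}) noetherian and so $\ZZ^s$-graded noetherian, so the connected $\NN^s$-graded noetherian algebra $\phi_!(A)$ is locally finite over the field $\k$, which forces each fibre of $\phi$ to meet $\supp A$ in a finite set. You instead argue directly: graded noetherianity yields finitely many homogeneous algebra generators of degrees $\xi_1, \dots, \xi_k$, the hypothesis that $\phi_!(A)$ is $\NN^s$-graded connected forces each $\phi(\xi_j)$ to lie in $\NN^s \setminus \{0\}$, and comparing total degrees bounds the number of monomials in the $\xi_j$ lying over a fixed $h$; this counting argument is sound (I checked the step $\supp A \cap \ker\phi = \{0\}$, which uses $A_0 = \k$). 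Your route is more self-contained --- it avoids both the regraded-noetherianity step and the standard local-finiteness fact, though it is essentially the proof of the latter unwound --- at the cost of a paragraph, while the paper's is shorter given those standard facts. Everything else, including the explicit check on the $A^\op$ side and the use of the Remark in \ref{phi-finite} to pass from finitely generated to $\phi$-finite cohomology, matches the paper's proof.
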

\begin{proof}
Let us prove item \ref{phi-connected-dc}. As we have already noticed, $\phi_!$
commutes with the restriction functors and does not change the fact that a
bimodule is finitely generated as left or right $A$-module, so 
$\phi_!(R^\bullet)$ complies with item \ref{fg-dc} of Definition 
\ref{dc-definition}. Since $A$ is $\ZZ^r$-graded noetherian it is also 
$\ZZ^s$-graded noetherian, and hence $\phi_!(A)$ is locally finite; this 
implies that $A$ is $\phi$-finite, otherwise $\phi_!(A)$ would have a 
homogeneous component of infinite dimension. Since the cohomology modules of 
$R^\bullet$ are finitely generated, they are also $\phi$-finite and hence by 
item \ref{phi-finite-derived-injdim} of Lemma \ref{derived-inj-dim}
$\injdim_A^{\ZZ^s} \phi_!(R^\bullet) = \injdim_A^{\ZZ^r} R^\bullet$, so item
\ref{inj-dc} of Definition \ref{dc-definition} also holds for $R^\bullet$. 
Finally item \ref{nat-dc} of the definition follows immediately from item 
\ref{nat-cog} of Lemma \ref{natural-map-lemma}.

We now prove item \ref{ungrading-dc}. Let $\psi: \ZZ^r \to \ZZ$ be the map 
$\psi(z_1, \ldots, z_r) = z_1 + \cdots + z_r$. Then $A$ is $\psi$-finite and 
$\psi_!(A)$ is connected $\NN$-graded, so by the first item $\psi_!(R^\bullet)$
is a $\ZZ$-graded dualizing complex over $A$ of injective dimension 
$\injdim_A^{\ZZ^r} R^\bullet$. Now a similar reasoning as the one we used for 
the first item, but this time using item \ref{noetherian-derived-injdim} of 
Lemma \ref{derived-inj-dim}, shows that $\O(\psi_!(R^\bullet)) = 
\O(R^\bullet)$ is a dualizing complex and gives the bound for its injective 
dimension.
\end{proof}

\begin{bibdiv}
\begin{biblist}
\bib{CQ-polycyclic}{article}{
  author={Chin, William},
  author={Quinn, Declan},
  title={Rings graded by polycyclic-by-finite groups},
  journal={Proc. Amer. Math. Soc.},
  volume={102},
  date={1988},
  number={2},
  pages={235--241},
}

\bib{Eks-auslander}{article}{ 
 author={Ekstr{\"o}m, Eva Kristina}, 
 title={The Auslander condition on graded and filtered Noetherian rings}, 
 conference={ 
  title={ Ann\'ee}, 
  address={Paris},
  date={1987/1988}, }, 
  book={ series={Lecture Notes in Math.}, 
  volume={1404},
  publisher={Springer}, 
  place={Berlin}, 
  }, 
 date={1989}, 
 pages={220--245}, 
}

\bib{FW-direcsumreps}{article}{
  author={Faith, Carl},
  author={Walker, Elbert A.},
  title={Direct-sum representations of injective modules},
  journal={J. Algebra},
  volume={5},
  date={1967},
  pages={203--221},
}

\bib{FF-graded}{article}{
  author={Fossum, Robert},
  author={Foxby, Hans-Bj{\o}rn},
  title={The category of graded modules},
  journal={Math. Scand.},
  volume={35},
  date={1974},
  pages={288--300},
}

\bib{GW-noetherian-book}{book}{ 
 author={Goodearl, K. R.}, 
 author={Warfield, R. B., Jr.}, 
 title={An introduction to noncommutative Noetherian rings}, 
 series={London Mathematical Society Student Texts}, 
 volume={61}, 
 edition={2}, 
 publisher={Cambridge University Press, Cambridge}, 
 date={2004}, 
 pages={xxiv+344}, 
}

\bib{Hart-RD}{book}{
  author={Hartshorne, Robin},
  title={Residues and duality},
  series={Lecture notes of a seminar on the work of A. Grothendieck, given
  at Harvard 1963/64. With an appendix by P. Deligne. Lecture Notes in
  Mathematics, No. 20},
  publisher={Springer-Verlag},
  place={Berlin},
  date={1966},
  pages={vii+423},
}

\bib{Jor-lc}{article}{
  author={J{\o}rgensen, Peter},
  title={Local cohomology for non-commutative graded algebras},
  journal={Comm. Algebra},
  volume={25},
  date={1997},
  number={2},
  pages={575--591},
}

\bib{Lev-ncreg}{article}{ 
 author={Levasseur, Thierry}, 
 title={Some properties of noncommutative regular graded rings}, 
 journal={Glasgow Math. J.}, 
 volume={34}, 
 date={1992}, 
 number={3},
 pages={277--300}, 
}

\bib{Mont-hopf-book}{book}{
  author={Montgomery, Susan},
  title={Hopf algebras and their actions on rings},
  series={CBMS Regional Conference Series in Mathematics},
  volume={82},
  publisher={Published for the Conference Board of the Mathematical
  Sciences, Washington, DC},
  date={1993},
  pages={xiv+238},
}

\bib{NV-graded-book3}{book}{ 
 author={N{\u{a}}st{\u{a}}sescu, Constantin}, 
 author={Van Oystaeyen, Freddy}, 
 title={Methods of graded rings}, 
 series={Lecture Notes in Mathematics},
 volume={1836}, 
 publisher={Springer-Verlag}, 
 place={Berlin}, 
 date={2004}, 
 pages={xiv+304},
}

\bib{RZ-twisted}{article}{
 author={Rigal, L.}, 
 author={Zadunaisky, P.}, 
 title={Twisted Semigroup Algebras},
 journal={Alg. Rep. Theory},
 date={2015},
 number={5},
 pages={1155--1186},
}

\bib{PP-secondHH}{article}{
  author={Polishchuk, Alexander},
  author={Positselski, Leonid},
  title={Hochschild (co)homology of the second kind I},
  journal={Trans. Amer. Math. Soc.},
  volume={364},
  date={2012},
  number={10},
  pages={5311--5368}, 
}

\bib{VdB-existence-dc}{article}{
  author={van den Bergh, Michel},
  title={Existence theorems for dualizing complexes over non-commutative
  graded and filtered rings},
  journal={J. Algebra},
  volume={195},
  date={1997},
  number={2},
  pages={662--679},
}

\bib{WZ-survey-dc}{article}{
  author={Wu, Q.-S.},
  author={Zhang, J. J.},
  title={Applications of dualizing complexes},
  conference={
   title={Proceedings of the Third International Algebra Conference},
   address={Tainan},
   date={2002},
  },
  book={
   publisher={Kluwer Acad. Publ., Dordrecht},
  },
  date={2003},
  pages={241--255},
}

\bib{Yek-dc}{article}{
  author={Yekutieli, Amnon},
  title={Dualizing complexes over noncommutative graded algebras},
  journal={J. Algebra},
  volume={153},
  date={1992},
  number={1},
  pages={41--84},
  issn={0021-8693},
}

\bib{Yek-note}{article}{ 
 author={Yekutieli, A.}, 
 title={Another proof of a theorem of Van den Bergh about graded-injective 
  modules}, 
 date={2014}, 
 note={Available at \url{http://arxiv.org/abs/1407.5916}}, 
}

\bib{YZ-aus-dc}{article}{
  author={Yekutieli, Amnon},
  author={Zhang, James J.},
  title={Rings with Auslander dualizing complexes},
  journal={J. Algebra},
  volume={213},
  date={1999},
  number={1},
  pages={1--51},
}

\bib{YZ-rigid-dc}{article}{
  author={Yekutieli, Amnon},
  author={Zhang, James J.},
  title={Rigid dualizing complexes over commutative rings},
  journal={Algebr. Represent. Theory},
  volume={12},
  date={2009},
  number={1},
  pages={19--52},
}

\bib{Zad-thesis}{book}{
 author={Zadunaisky, Pablo},
 title={Homological regularity properties of quantum flag varieties
  and related algebras},
 year={2014},
 note={PhD Thesis. Available online at
 \url{http://cms.dm.uba.ar/academico/carreras/doctorado/desde}},
}
\end{biblist}
\end{bibdiv}

\vspace{5em}

\noindent A.S.: \\
IMAS-CONICET y Departamento de Matem\'atica\\
Facultad de Ciencias Exactas y Naturales,\\
Universidad de Buenos Aires,\\
Ciudad Universitaria, Pabell\'on 1\\
1428, Buenos Aires, Argentina.\\
\texttt{asolotar@dm.uba.ar}

\bigskip

\noindent P.Z. :\\
Instituto de Matem\'atica e Estat\'istica, \\
Universidade de S\~ao Paulo. \\
Rua do Matão, 1010 \\
CEP 05508-090 - S\~ao Paulo - SP\\
\texttt{pzadun@ime.usp.br}

\end{document}